\documentclass{IEEEtran}
\usepackage{cite}
\usepackage{amsmath,amssymb,amsfonts}
\usepackage{algorithmic}
\usepackage{graphicx}
\usepackage{textcomp}
\usepackage{color}
\usepackage{cases}
\usepackage{empheq}
\usepackage{comment}
\newtheorem{theorem}{Theorem}

\newtheorem{definition}{Definition}
\newtheorem{remark}{Remark}
\newtheorem{proposition}{Proposition}
\newtheorem{proof}{Proof}
\newtheorem{corollary}{Corollary}
\newtheorem{lemma}{Lemma}
\newcommand{\dn}{\mathbf{d}}
\newcommand{\R}{\mathbb{R}}

\usepackage{caption}
\usepackage{subcaption}
\usepackage{hyperref}
\def\BibTeX{{\rm B\kern-.05em{\sc i\kern-.025em b}\kern-.08em
    T\kern-.1667em\lower.7ex\hbox{E}\kern-.125emX}}
\begin{document}
\title{ Finite/fixed-time Stabilization of Linear Systems  with States Quantization }
\author{Yu Zhou,  Andrey Polyakov,  and Gang Zheng, 
\thanks{Yu Zhou (yu.zhou@inria.fr), Andrey Polyakov (andrey.polyakov,inria.fr), Gang Zheng 
 (gang.zheng@inria.fr), are with Inria, Univ. Lille, CNRS, Centrale Lille, Parc scientifique de la Haute Borne 40, av. Halley, Bât A, Park Plaza 59650, Villeneuve d'Ascq, France. }
\thanks{}
\thanks{}}

\maketitle

\begin{abstract}
This paper develops a homogeneity-based approach to finite/fixed-time stabilization of linear time-invariant (LTI) system with quantized measurements. A sufficient condition for finite/fixed-time stabilization of multi-input LTI system under  states quantization is derived. It is shown that a homogeneous quantized state feedback with  logarithmic quantizer can guarantee finite/fixed-time stability of the closed-loop system provided that the quantization is sufficiently dense.  Theoretical results are supported with  numerical simulations.
\end{abstract}

\begin{IEEEkeywords}
homogeneous system, finite/fixed-time stabilization, quantization
\end{IEEEkeywords}

\section{Introduction}
\label{sec:introduction}

Quantization and sampling are two main processes of digitization of a continuous signal.
Considering a continuous signal as a function of time,  the \textit{digitization of the time domain}
is known as sampling \cite{hristu2005handbook}, but the \textit{digitization of the co-domain} is referred to as quantization \cite{gray1998quantization}. Both methods are crucial to network control systems with a limited data transmission rate. In the last two decades, finite-time and fixed-time  state feedback stabilization  have attracted much attention due to their advantages, controlled settling time, faster convergence and better robustness (see, e.g., \cite{bhat_etal_2005_MCSS},\cite{Orlov2004:SIAM},  \cite{hong2006finite}, \cite{hong2010finite}, \cite{PolyakovTAC2012}, \cite{Song_etal2017:Aut}). 
It raises a  question: \textit{how to stabilize an LTI system in finite/fixed time in the case of digitization of a control law?} 

Continuous-time homogeneous systems with nonzero degrees are typical examples of dynamical systems with finite/fixed-time convergence rates \cite{bhat_etal_2005_MCSS}, \cite{andrieu2008homogeneous}, \cite{polyakov2020book}.  
The LTI system is a particular case of a homogeneous system with zero degree and an exponential convergence rate.
Continuous-time homogeneous systems have  many useful properties for control systems design: homogeneous Lyapunov functions, equivalence between local and global stability, tuning the finite/fixed-time convergence by means of  homogeneity degree, and small overshoots,  etc. (see, e.g., \cite{zubov1958systems}, \cite{rosier1992SCL},\cite{kawski1990_CTAT}, \cite{bhat_etal_2005_MCSS},  \cite{polyakov2020book}).   
Various homogeneous feedback control methods have been developed for continuous-time systems in the last two decades (see, e.g., \cite{grune2000SIAM}, \cite{hong2001output},\cite{andrieu2008homogeneous}, \cite{nakamura_etal_2009_TAC}, \cite{polyakov2019IJRNC}, \cite{zimenko_etal_2020_TAC} ). 
However, the digitization may destroy advantages of homogeneous controllers discovered for continuous-time case. Indeed, time sampling of the control signal generated by a homogeneous finite-time controller may invoke the so-called numerical chattering (see, e.g. \cite{AcaryBrogliato2010:SCL}, \cite{bernuau_etal2017Aut}). 
The problem of preservation of the finite/fixed-time convergence rate 
under sampled-in-time implementation of homogeneous controllers  has been solved recently \cite{polyakov_etal2023:Aut} using the concept of the so-called consistent discretization, while the problem of finite/fixed-time stabilization under state quantization still remains open. 

Mathematically, a  quantizer can be defined as a discrete-valued mapping, transforming continuous input signals into a discrete set of values.
The extensive research has been conducted on quantized control for LTI systems, including stabilization \cite{Fu_etal_2005_TAC},  \cite{delchamps1990TAC}, \cite{brockett_Liberzon2000TAC}, \cite{gao2008Aut} and robustness analysis \cite{kang2015Aut}, \cite{su2018Aut}, \cite{liberzon2007TAC}  as well as  a quantizer design \cite{elia_etal_2001_TAC}, \cite{bullo_Liberzon2006TAC},  \cite{brockett_Liberzon2000TAC}, \cite{wang2021TAC}, \cite{wang2022TAC}.
That research was focused on the asymptotic stability/stabilization of LTI system under quantization.
The discrete-time model is beneficial for quantization feedback law design for a linear system. However, the use of a discrete model for quantizer design in
the case of a nonlinear control system is challenging due to the
obvious difficulty of obtaining an exact nonlinear sampled-data
model. The majority of nonlinear feedback control laws that utilize quantized state measurements are developed under the so-called ``emulation" method \cite{nesic2009TAC}, i.e., the controller is first designed in continuous time, and next, the quantizer parameters are determined to guarantee the stability of the system.

The so-called logarithmic quantizer is derived using the quadratic Lyapunov function for a discrete-time linear time-invariant (LTI) system in \cite{elia_etal_2001_TAC}.
It follows the intuitive idea that the farther from the origin the state is, the less precise control action and the less precise information about the state are needed to stabilize the system. A linear control system with the logarithmic state quantization can preserve global exponential stability \cite{Fu_etal_2005_TAC}, \cite{bullo_Liberzon2006TAC}. Nonlinear control systems  with logarithmic quantizer are studied in \cite{ceragioli2007CDC}, \cite{liu2012Aut},  \cite{rehak2019IFAC}, \cite{yan2019Aut}, \cite{ye2022TAC}.
However, to the best of the author's knowledge, it remains the open question: \textit{is it possible to achieve finite/fixed-time stabilization of LTI system by means of a  feedback law with  a logarithmic state quantizer?}
In this paper, we positively answer this question and design a  homogeneity-based feedback preserving finite/fixed-time stability of a continuous-time LTI system in the case of a logarithmic quantization of the state measurements. The preliminary version of the paper has been presented at Conference on Decision and Control 2023 \cite{zhou_etal2023:CDC}, where only the controlled integrator chain has been studied. 

The key novel contribution of this paper is as follows:  a sufficient condition for finite/fixed-time stabilization  of LTI system by a homogeneous feedback with states quantization is established; finite/fixed-time stabilization of multi-input LTI system is realized by means of homogeneous feedback with logarithmic quantizer.

	The rest of the paper is organized as follows. The next section formulates the problem studied in the paper. 
In Section \ref{section:pre}, we briefly introduce homogeneous control systems analysis and homogeneous state feedback control design on linear system. Section \ref{section:main} presents the paper's main results, including the general stability condition of the homogeneous control system with quantized state measurements; fixed-time stabilization with logarithmic quantization. Finally, Section \ref{section:sim} presents numerical validation results.

\subsection*{Notations}\label{section:2}
$\mathbb{R}$ is the set of real numbers, $\mathbb{R}_{+}=\{x \in \mathbb{R}: x \geq 0\}$; $\mathbb{N}$ is set of all natural numbers without zero; $\mathbf{0}$ denotes the zero vector from $\mathbb{R}^{n}; \operatorname{diag}\left\{\lambda_{i}\right\}_{i=1}^{n}$ is the diagonal matrix with elements $\lambda_{i} ; P \succ 0(\prec 0, \succeq 0, \preceq 0)$ for $P \in \mathbb{R}^{n \times n}$ means that the matrix $P$ is symmetric and positive (negative) definite (semidefinite); $\lambda_{\min }(P)$ and $\lambda_{\max }(P)$ represent the minimal and maximal eigenvalue of a matrix $P=P^{\top}$; for $P \succeq 0$ the square root of $P$ is a matrix $M=P^{\frac{1}{2}}$ such that $M^{2}=P$.
$|x|$, $x \in \mathbb{R}$, is the absolute value of a scalar.  
$\|x\|=\sqrt{x^{\top}Px}$  is the weighted Euclidean norm of  $x\in \mathbb{R}^{n}$, where $P\succ 0$; $\|x\|_2=\sqrt{x^{\top}x}$ is canonical Euclidean norm of $x\in \mathbb{R}^{n}$;
 $\mathcal{K}$ is the set of continuous strictly increasing functions $\mathcal{K}: \mathbb{R}_+\mapsto\mathbb{R}_+$ such that $\mathcal{K}(0)=0$; $I_n$ denotes the identity matrix in $\mathbb{R}^n$. $\mathbb{S}(1)$ denotes the unit sphere $\mathbb{S}(1):=\{x\in\mathbb{R}^n:\|x\|=1\}$.
 
\section{Problem statement}\label{section:problem_sta}
Consider the linear system given by:
\begin{equation}\label{eq:lin_sys}
	\dot{x} = Ax + Bu,\quad t>0, \quad x(0)=x_0,
\end{equation}
where $x(t) \in \mathbb{R}^{n}$ is the system state, $u(t)\in \mathbb{R}^{m}$ is the control, $A \in \mathbb{R}^{n \times n}$ and $B \in \mathbb{R}^{n \times m}$ are system matrices. 
The pair $(A, B)$ is assumed to be controllable. Additionally, we assume  that all state measurements are subject to a quantization and the  quantized state vector is available for feedback control design.

Mathematically, the quantization involves partitioning the state space into disjoint sets. We define a quantization of $\mathbb{R}^n$ following ideas of  the  Voronoi diagram \cite{boots2009spatial}. 

Let $\mathcal{E}$ be a subset of $\mathbb{R}^n$ and $\Xi$ be a subset of $\mathbb{N}$. Let $\mathcal{D}_i\subseteq\mathcal{E}$ with $i\in\Xi$ be non-empty disjoint subsets covering $\mathcal{E}$: $\mathcal{D}_i\cap \mathcal{D}_j = \emptyset$ if $i\neq j$ and $\underset{i\in\Xi}{\cup} \mathcal{D}_i=\mathcal{E}$. Let $\mathcal{Q}=\{q_i\}_{i\in \Xi}$ such that $q_i\in \mathcal{D}_i$ for any $i\in \Theta$.

\begin{definition}	\itshape
	 A function $\mathfrak{q}:\mathcal{E}\mapsto \mathcal{Q}$ is said to be a quantization function (\textit{quantizer}) on the set $\mathcal{E}$ if $\mathfrak{q}(x)=q_i$ for all $x\in \mathcal{D}_i$ and all $i\in \Xi$. 
\end{definition}

The set $\mathcal{Q}$ represents a collection of all quantization values.
Each disjoint set $\mathcal{D}_i$ is called by a quantization cell, but  the  vector $\mathfrak{q}_i$ is the quantization seed of $\mathcal{D}_i$ which represents all states within the quantization cell in order to reduce transmission data (by sending just the number $i\in \Xi$ instead of the whole state vector $x\in \mathcal{D}_i$).
In this paper, we consider quantization functions defined on $\R^n$, i.e., $\mathfrak{q}:\mathbb{R}^n\rightarrow \mathcal{Q}$.


The control system with quantized state measurements, denoted as:
\begin{equation}\label{eq:q_sys}
	\dot{x} = Ax+ Bu(\mathfrak{q}(x)),
\end{equation}
where $u(\cdot)$ is a state feedback law which uses the quantized measurements $q(x)$ of the state $x$. Such a closed-loop system is discontinuous. Its solutions are understood
in the sense of Filippov \cite{filippov2013differential}.

The objective of this paper is to design a controller  ensuring global finite/fixed-time stabilization\footnote{The system \eqref{eq:lin_sys} is said to be globally uniformly 
\begin{itemize}
\item 
\textit{Lyapunov stable} if there exists $\alpha\in \mathcal{K}$ such that $\|x(t)\|\leq \alpha(\|x_0\|), \forall x_0\in \R^n, \forall t\geq 0$;
\item \textit{finite-time stable} if it is globally uniformly Lyapunov stable and there exists a locally bounded function $T: \R^n \mapsto \R_+$ such that $\|x(t)\|=0, \forall t\geq T(x_0), \forall x_0\in \R^n$; 
\item fixed-time stable if it is globally uniformly finite-time stable and the settling-time function $T$ is globally bounded, i.e., $\exists T_{\max } \in \mathbb{R}_{+}$such that $T\left(x_0\right) \leq T_{\max }$, $\forall x_0 \in \mathbb{R}^n$.
\end{itemize}} of the system \eqref{eq:q_sys} using quantized state measurement.

\section{Preliminary}\label{section:pre}
 Homogeneity refers to a class of dilation symmetries, which have been shown to possess several useful properties for control design and analysis \cite{zubov1958systems}, \cite{khomenyuk1961systems}, \cite{kawski1991}, \cite{rosier1992SCL},  \cite{bhat_etal_2005_MCSS}. In finite-dimensional systems, linear dilation is widely used.
\begin{definition}\cite{kawski1991}
	\textit{A mapping $\dn(s):\mathbb{R}^n\mapsto \mathbb{R}^n$, $s\in\mathbb{R}$ is said to be a  dilation in $\mathbb{R}^n$ if }
	\begin{itemize}
		\item \textit{$\dn(0) = I_n$, $\dn(s+t) = \dn(s)\dn(t)=\dn(t)\dn(s)$, $\forall s, t\in\mathbb{R}$;}
		\item  \textit{$\lim\limits_{s\rightarrow -\infty}\|\dn(s)x\| =0$ and $\lim\limits_{s\rightarrow \infty}\|\dn(s)x\| =\infty$.}  
	\end{itemize}
\end{definition}
The dilation $\dn$ is continuous if the mapping $s\mapsto \dn(s)$ is continuous. 
Any \textit{linear} continuous dilation in $\R^n$ is given by
\begin{equation}\label{eq:dilation}
\dn(s) = e^{sG_\dn}:=\sum_{i=0}^{\infty} \tfrac{s^i G_\dn^i}{i!}.
\end{equation}
Since $\tfrac{d}{ds}e^{G_\dn s} = e^{G_\dn s} G_\dn= G_\dn e^{G_\dn s} $,  the derivative at $s=0$ is just an anti-Hurwitz  matrix $G_\dn\in \R^{n\times n}$ called the generator of a dilation.

\begin{definition}\label{def:monocity}
	\textit{A dilation $\dn$ is strictly monotone with respect to a norm $\|\cdot\|$ in $\R^n$ if $\exists \beta>0$ such that\vspace{-1mm}
	$$
	\|\mathbf{d}(s)\| \leq e^{\beta s}, \quad \forall s \leq 0.
	$$
}
\end{definition}
By default, below, we deal with linear continuous dilation given by \eqref{eq:dilation}.
The following result is the straightforward consequence of the quadratic Lyapunov function theorem for linear systems. 
\begin{proposition}\cite{polyakov2019IJRNC}\label{prop: 1}
	\textit{A dilation $\mathbf{d}$ is strictly monotone  with respect to the norm $\|z\|=\sqrt{z^\top P z}$ if and only if the following linear matrix inequality holds
	\begin{equation} P\succ 0, \
	P G_\dn+G_\dn^{\top} P \succ 0,\label{eq:mon_cond}
	\end{equation}
	where $G_\dn \in \mathbb{R}^{n\times n}$ is the generator of the linear continuous dilation $\mathbf{d}$ in $\R^n$. }
\end{proposition}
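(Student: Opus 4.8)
The plan is to reduce the claim to the classical quadratic Lyapunov theorem for linear systems, applied along the dilation curve. Fix $z_0\in\R^n\setminus\{\mathbf{0}\}$ and put $y(s):=\dn(s)z_0=e^{sG_\dn}z_0$; by \eqref{eq:dilation} this curve solves the linear ODE $\dot y=G_\dn y$. Introduce $V(s):=\|y(s)\|^2=y(s)^\top P y(s)$. Differentiating and using that $G_\dn$ commutes with $e^{sG_\dn}$ (and $G_\dn^\top$ with $e^{sG_\dn^\top}$), one gets
\[
\dot V(s)=y(s)^\top\bigl(G_\dn^\top P+PG_\dn\bigr)y(s).
\]
Because $\dn(s)$ is invertible, as $z_0$ runs over $\R^n\setminus\{\mathbf{0}\}$ the vector $y(s)$ runs over $\R^n\setminus\{\mathbf{0}\}$ as well, so the behaviour of $V$ along all dilation curves is governed by the symmetric matrix $Q:=G_\dn^\top P+PG_\dn$. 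I will also use the elementary fact that, for $P\succ 0$, the strict inequality $Q\succ 0$ is equivalent to the existence of $\beta>0$ with $Q\succeq 2\beta P$ (take $2\beta:=\min_{z^\top Pz=1}z^\top Qz$, which is attained and positive on the compact $P$-ellipsoid, and then rescale; the reverse implication is trivial).

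\emph{Sufficiency.} Assume \eqref{eq:mon_cond}. Pick $\beta>0$ with $Q\succeq 2\beta P$, so that $\dot V(s)\ge 2\beta V(s)$ for all $s\in\R$. Hence $\frac{d}{ds}\bigl(e^{-2\beta s}V(s)\bigr)\ge 0$, i.e.\ $e^{-2\beta s}V(s)$ is nondecreasing in $s$; comparing $s\le 0$ with $s=0$ yields $V(s)\le e^{2\beta s}V(0)$, that is $\|\dn(s)z_0\|\le e^{\beta s}\|z_0\|$ for every $z_0$ and every $s\le 0$. Taking the supremum over $z_0\in\mathbb{S}(1)$ gives $\|\dn(s)\|\le e^{\beta s}$ for $s\le 0$, which is exactly strict monotonicity in the sense of Definition \ref{def:monocity}.

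\emph{Necessity.} Conversely, suppose $\dn$ is strictly monotone w.r.t.\ $\|\cdot\|$ with constant $\beta>0$; $P\succ 0$ holds because $\|\cdot\|$ is a norm. For any $z_0\in\mathbb{S}(1)$ and any $s<0$ we have $V(s)=\|\dn(s)z_0\|^2\le e^{2\beta s}=e^{2\beta s}V(0)$, hence $\frac{V(s)-V(0)}{s}\ge\frac{e^{2\beta s}-1}{s}$ (the direction of the inequality reverses since $s<0$). Letting $s\to 0^-$ gives $\dot V(0)\ge 2\beta$, i.e.\ $z_0^\top Qz_0\ge 2\beta$ for all $z_0\in\mathbb{S}(1)$; by homogeneity of the quadratic form this means $Q\succeq 2\beta P\succ 0$, which is \eqref{eq:mon_cond}.

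The argument is short, and I do not anticipate a genuine obstacle: the statement is essentially a reformulation of the Lyapunov theorem for $\dot y=G_\dn y$ read ``backwards'' in $s$. The only points requiring a little care are the passage between the operator-norm bound $\|\dn(s)\|\le e^{\beta s}$ and the pointwise bound $\|\dn(s)z_0\|\le e^{\beta s}\|z_0\|$ (via the definition of the induced norm together with the quadratic homogeneity of $V$ in $z_0$), and the equivalence between the strict LMI $Q\succ 0$ and the scaled inequality $Q\succeq 2\beta P$ used to extract the rate $\beta$.
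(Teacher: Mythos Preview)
Your argument is correct. The paper does not supply its own proof of this proposition: it is quoted from \cite{polyakov2019IJRNC} and prefaced only by the remark that it is ``the straightforward consequence of the quadratic Lyapunov function theorem for linear systems.'' Your write-up is precisely that straightforward consequence, carried out explicitly by differentiating $V(s)=\|\dn(s)z_0\|^2$ along the dilation curve $\dot y=G_\dn y$ and reading off the rate $\beta$ from the Rayleigh quotient of $Q=G_\dn^\top P+PG_\dn$ relative to $P$; so your approach matches the one the paper alludes to.
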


The linear continuous dilation $\dn$ induces an alternative topology in $\mathbb{R}^n$ via a ``homogeneous norm" \cite{kawski1995IFAC}.

\begin{definition}\cite{polyakov2019IJRNC}\label{def:hom_norm}
	\textit{The function $\|\cdot\|_\dn: \mathbb{R}^n \mapsto\R_+$ defined as $\|\mathbf{0}\|_{\dn}=0$ and 
	$$
	\|x\|_{\mathbf{d}}=e^{s}, \text { where } s \in \mathbb{R}:\left\|\mathbf{d}\left(-s\right) x\right\|=1, \quad x\neq \mathbf{0}
	$$
	is called the canonical homogeneous norm in $\mathbb{R}^n$, where $\mathbf{d}$ is a continuous monotone linear dilation on $\mathbb{R}^n$.}
\end{definition}

The monotonicity of the dilation is required to guarantee that the functional
$\|\cdot\|_\dn$ is single-valued and continuous at the origin.
 Although the canonical homogeneous norm does not satisfy the triangle inequality in $\R^n$, it is a norm in a special  Euclidean space \cite{polyakov2020book} being homeomorphic to $\R^n$. 
\begin{proposition}\cite{polyakov2019IJRNC}, \cite[Corollary 6.4]{polyakov2020book}\label{prop:3}
	\textit{Let $\mathbf{d}$ be strictly monotone linear continuous dilation on  $\mathbb{R}^n$. Then 
	\begin{equation}\label{eq:d_bound}
		\left\{
		\begin{aligned}
			&e^{\underline{\eta} s} \leq\lfloor\dn(s)\rfloor\le\|\mathbf{d}(s) \| \leq e^{\overline{\eta} s}, \ & s \geq 0, \\
			&e^{\overline{\eta} s} \leq \lfloor\dn(s)\rfloor\le \|\mathbf{d}(s) \| \leq e^{\underline{\eta} s}, \ &  s \leq 0,
		\end{aligned}
		\right. \quad \forall s\in \R,
	\end{equation}
	\begin{equation}\label{eq:hom_eclidean}
		\left\{
		\begin{aligned}
			\begin{aligned}
				& \|x\|_\dn^{\underline{\eta}} \leq\|x\| \leq \|x\|_\dn^{\overline{\eta}},&\ \|x\|\ge 1,\\
				& \|x\|_\dn^{\overline{\eta}} \leq\|x\| \leq \|x\|_\dn^{\underline{\eta}}, &\ \|x\|\le 1 ,	
			\end{aligned}
		\end{aligned}
		\right. \quad \forall x \in \mathbb{R}^n,
	\end{equation}
	where $\lfloor\dn(s)\rfloor=\inf _{u \in S}\|\mathbf{d}(s) u\|=\inf _{u \neq 0} \frac{\|\mathbf{d}(s) u\|}{\|u\|}$, \\
	$
	\overline{\eta}=\tfrac{1}{2} \lambda_{\max }\left(P^{\frac{1}{2}} G_{\mathrm{d}} P^{-\frac{1}{2}}+P^{-\frac{1}{2}} G_{\mathrm{d}}^{\top} P^{\frac{1}{2}}\right)>0,
	$\\
	$	\underline{\eta}=\tfrac{1}{2} \lambda_{\min }\left(P^{\frac{1}{2}} G_{\mathrm{d}} P^{-\frac{1}{2}}+P^{-\frac{1}{2}} G_{\mathrm{d}}^{\top} P^{\frac{1}{2}}\right)>0.
	$
}
\end{proposition}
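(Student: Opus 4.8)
The plan is to reduce both displayed chains of inequalities to a single scalar differential estimate. Fix $x\neq\mathbf{0}$ and set $V(s):=\|\mathbf{d}(s)x\|^2 = x^\top\mathbf{d}(s)^\top P\,\mathbf{d}(s)x$. Using $\tfrac{d}{ds}\mathbf{d}(s)=G_\dn\mathbf{d}(s)=\mathbf{d}(s)G_\dn$ I would differentiate to obtain $\dot V(s)=(\mathbf{d}(s)x)^\top(PG_\dn+G_\dn^\top P)(\mathbf{d}(s)x)$, and then substitute $z=P^{\frac12}\mathbf{d}(s)x$ so that $V(s)=z^\top z$ and $\dot V(s)=z^\top M z$ with $M=P^{\frac12}G_\dn P^{-\frac12}+P^{-\frac12}G_\dn^\top P^{\frac12}$. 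The matrix $M$ is symmetric and, by Proposition \ref{prop: 1}, positive definite, so $2\underline\eta\,V(s)\le\dot V(s)\le 2\overline\eta\,V(s)$ with $\overline\eta,\underline\eta$ the largest and smallest eigenvalues of $\tfrac12 M$ --- exactly the constants in the statement.

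Next I would integrate this two-sided linear inequality via the comparison lemma. For $s\ge 0$ this gives $V(0)e^{2\underline\eta s}\le V(s)\le V(0)e^{2\overline\eta s}$; reading the same estimate backwards for $s\le 0$ swaps the two exponents because $\underline\eta\le\overline\eta$, yielding $V(0)e^{2\overline\eta s}\le V(s)\le V(0)e^{2\underline\eta s}$. Taking square roots and using $V(0)=\|x\|^2$ produces the pointwise bounds $e^{\underline\eta s}\|x\|\le\|\mathbf{d}(s)x\|\le e^{\overline\eta s}\|x\|$ for $s\ge 0$ (reversed for $s\le 0$). Dividing by $\|x\|$ and taking the infimum, resp. supremum, over $x\in\mathbb{S}(1)$ yields \eqref{eq:d_bound}, while $\lfloor\mathbf{d}(s)\rfloor\le\|\mathbf{d}(s)\|$ is immediate from their definitions.

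For \eqref{eq:hom_eclidean} I would invoke Definition \ref{def:hom_norm}: write $\|x\|_\dn=e^s$ with $\|\mathbf{d}(-s)x\|=1$. Strict monotonicity makes $\tau\mapsto\|\mathbf{d}(-\tau)x\|$ strictly decreasing with value $\|x\|$ at $\tau=0$, so $\|x\|\ge 1$ forces $s\ge 0$ and $\|x\|\le 1$ forces $s\le 0$. Substituting the argument $-s$ into the bounds just derived for $\|\mathbf{d}(\cdot)x\|$ and using $\|\mathbf{d}(-s)x\|=1$ then squeezes $\|x\|$ between $e^{\underline\eta s}=\|x\|_\dn^{\underline\eta}$ and $e^{\overline\eta s}=\|x\|_\dn^{\overline\eta}$, with the order determined by the sign of $s$; this is exactly \eqref{eq:hom_eclidean}.

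The matrix algebra and the application of the comparison lemma are routine. The only delicate point --- and the main, if modest, obstacle --- is keeping track of which inequality reverses when $s\le 0$ (equivalently when $\|x\|\le 1$), i.e., exactly when $\underline\eta$ and $\overline\eta$ trade places; the positive-definiteness of $M$ supplied by Proposition \ref{prop: 1} is what makes both $\underline\eta$ and $\overline\eta$ strictly positive and hence the whole argument valid.
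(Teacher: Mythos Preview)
Your argument is correct. The key scalar differential inequality $2\underline\eta\,V\le \dot V\le 2\overline\eta\,V$ is set up and integrated correctly, the sign bookkeeping for $s\lessgtr 0$ is handled properly, and the passage from \eqref{eq:d_bound} to \eqref{eq:hom_eclidean} via Definition~\ref{def:hom_norm} is exactly the intended one.

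Note, however, that the paper itself does \emph{not} prove Proposition~\ref{prop:3}: it is quoted verbatim from \cite{polyakov2019IJRNC} and \cite[Corollary~6.4]{polyakov2020book} and used as a black box. Your derivation is the standard Lyapunov-type argument underlying those references, so there is no methodological difference to compare --- you have simply supplied the proof the paper chose to omit.
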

The following lemma establishes a connection between the Euclidean and canonical homogeneous norms offering a valuable tool for analysis of  quantizers.
{
\begin{lemma}\label{lem:1}
	\textit{
		Let $G_\dn$ be a diagonalizable generator of a continuous linear dilation $\dn$ and $G_\dn = \Gamma \Lambda \Gamma^{-1}$, where $\Lambda$ is a diagonal matrix. 
		For any $z,y\in\mathbb{R}^n$, $0<\varepsilon<1$  there exists $\delta>0$ such that 
		\[
		|\xi_{z_i}|\leq \delta|\xi_{y_i}|, \forall i=1\ldots n \quad \Rightarrow \quad \|z\|_\dn\le\varepsilon \|y\|_\dn,
		\]
		where $\xi_z = \Gamma^{-1}z = [\xi_{z,1}, \xi_{z,2},\cdots,\xi_{z,n}]^\top$, $\xi_y = \Gamma^{-1}y = [\xi_{y,1}, \xi_{y,2},\cdots,\xi_{y,n}]^\top$,  the canonical homogeneous norm $\|\cdot\|_\dn$ is induced by the norm $\|x\|=\sqrt{x^\top P x}$, $x\in\mathbb{R}^n$ with a symmetric matrix $P\in \R^{n\times n}$ satisfying \eqref{eq:mon_cond}.
	}
\end{lemma}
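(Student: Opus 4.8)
The plan is to work in the eigenbasis of $G_\dn$, where the dilation acts coordinatewise, reduce the hypothesis to a comparison of weighted Euclidean norms that is \emph{uniform in the dilation parameter}, and then convert this into a comparison of canonical homogeneous norms via the monotonicity estimates \eqref{eq:d_bound} of Proposition~\ref{prop:3}. (Throughout I use that, by Proposition~\ref{prop: 1}, condition \eqref{eq:mon_cond} makes $\dn$ strictly monotone with respect to $\|x\|=\sqrt{x^\top Px}$, so that $\|\cdot\|_\dn$ is well defined and \eqref{eq:d_bound} is applicable.)

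First I would use \eqref{eq:dilation} and $G_\dn=\Gamma\Lambda\Gamma^{-1}$ to write $\dn(s)=\Gamma e^{s\Lambda}\Gamma^{-1}$, so that $\Gamma^{-1}\dn(s)x=e^{s\Lambda}\xi_x$ and, $\Lambda$ being diagonal, $|(e^{s\Lambda}\xi_x)_i|=e^{s\,\mathrm{Re}\,\lambda_i}\,|\xi_{x,i}|$ for each $i$. The decisive observation is that this scaling is the \emph{same} for $z$ and $y$ in every coordinate, so the componentwise domination $|\xi_{z,i}|\le\delta\,|\xi_{y,i}|$, $i=1,\dots,n$, is invariant under $\dn(-s)$ for \emph{every} $s\in\mathbb{R}$; hence $\|\Gamma^{-1}\dn(-s)z\|_2\le\delta\,\|\Gamma^{-1}\dn(-s)y\|_2$ for all $s\in\mathbb{R}$. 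This is the step requiring the most care, and the one I expect to be the main obstacle: when $G_\dn$ has complex eigenvalues $\Gamma$ and $\Lambda$ are complex, yet the vectors $\dn(-s)z$, $\dn(-s)y$ stay real, and it is their coordinate \emph{moduli} in the $\xi$-representation that must be compared, so the bookkeeping has to be done carefully to keep the comparison constant independent of $z$, $y$ and $s$.

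Next I would translate this $\|\cdot\|_2$ inequality into the weighted norm $\|\cdot\|$. Using $\sqrt{\lambda_{\min}(P)}\,\|v\|_2\le\|v\|\le\sqrt{\lambda_{\max}(P)}\,\|v\|_2$ for real $v$ together with $a\,\|v\|_2\le\|\Gamma v\|_2\le b\,\|v\|_2$ for $v\in\mathbb{C}^n$ (where $0<a\le b$ are the extreme singular values of $\Gamma$) gives a dilation-uniform bound $\|\dn(-s)z\|\le C\delta\,\|\dn(-s)y\|$, $\forall s\in\mathbb{R}$, with $C=\tfrac{b}{a}\sqrt{\lambda_{\max}(P)/\lambda_{\min}(P)}$ a constant depending only on $P$ and $\Gamma$. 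The degenerate cases are immediate and I would dispose of them separately: if $y=\mathbf{0}$ the hypothesis forces $z=\mathbf{0}$, and if $z=\mathbf{0}$ then $\|z\|_\dn=0\le\varepsilon\,\|y\|_\dn$.

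Finally, assuming $z,y\neq\mathbf{0}$, I would evaluate the estimate at $s=s_y$, where $s_y$ is defined by $\|\dn(-s_y)y\|=1$, i.e. $\|y\|_\dn=e^{s_y}$; this gives $\|w\|\le C\delta$ for $w:=\dn(-s_y)z$. Writing $\|z\|_\dn=e^{s_z}$ and $\sigma:=s_y-s_z$, the definition of the homogeneous norm gives $\|\dn(\sigma)w\|=1$ and $\|z\|_\dn=\|y\|_\dn\,e^{-\sigma}$. Choosing $\delta:=\varepsilon^{\overline{\eta}}/C$ makes $C\delta=\varepsilon^{\overline{\eta}}<1$ (as $0<\varepsilon<1$, $\overline{\eta}>0$), hence $\|w\|<1$; then \eqref{eq:d_bound} forces $\sigma\ge 0$ and $1=\|\dn(\sigma)w\|\le e^{\overline{\eta}\sigma}\|w\|\le e^{\overline{\eta}\sigma}C\delta$, so $e^{-\sigma}\le(C\delta)^{1/\overline{\eta}}=\varepsilon$ and therefore $\|z\|_\dn\le\varepsilon\,\|y\|_\dn$. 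Apart from the coordinatewise-invariance observation of the second paragraph, every step above is a routine chain of inequalities.
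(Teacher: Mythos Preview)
Your proof is correct and follows essentially the same route as the paper's: both exploit that in the eigenbasis of $G_\dn$ the componentwise domination $|\xi_{z,i}|\le\delta|\xi_{y,i}|$ is preserved under the diagonal action $e^{-s\Lambda}$, convert this into a bound on $\|\dn(-s)z\|$ evaluated at (or near) $s=\ln\|y\|_\dn$, and finish with the monotonicity estimates \eqref{eq:d_bound}. The only cosmetic difference is that you extract an explicit $\delta=\varepsilon^{\overline{\eta}}/C$ via the bound $\|\dn(\sigma)\|\le e^{\overline{\eta}\sigma}$, whereas the paper pre-shifts by $e^{-\Lambda\ln\varepsilon}$, absorbs the $\varepsilon$-dependence into $\lambda_{\max}(e^{-\Lambda\ln\varepsilon})$, and argues by ``sufficiently small $\delta$''.
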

\begin{proof}
    Notice that the $|\xi_{z,i}|\le\delta|\xi_{y,i}|$ indicates that
    $
\sum_{i=1}^{n}\xi_{z,i}^2 \le \delta^2\sum_{i=1}^{n}\xi_{y,i}^2$.
Besides, since $G_\dn$ is anti-Hurwitz, we have \vspace{-2mm}
        \[
\sum_{i=1}^{n}e^{-2\lambda_i s}\xi_{z,i}^2 \le \delta^2\sum_{i=1}^{n}e^{-2\lambda_i s}\xi_{y,i}^2, \ \forall s\in\mathbb{R}, \lambda_i >0,\vspace{-2mm}
    \]
and we note $\Lambda = \operatorname{diag}(\lambda_1,\lambda_2,\cdots, \lambda_n)\succ 0$.
The above inequality is equivalent to
     \[
\|e^{-\Lambda s}\Gamma^{-1}z\|_2\le \delta \|e^{-\Lambda s}\Gamma^{-1}y\|_2, \ s\in\mathbb{R}.
    \]
By Definition \ref{def:hom_norm} of the canonical homogeneous norm, we have $$
  1= \|\Gamma e^{-\Lambda s_y}\Gamma^{-1}y\|^2\!\geq\! \lambda_{\min}(\Gamma^\top P \Gamma) \|e^{-\Lambda s_y}\Gamma^{-1} y\|^2_2,$$ where $s_y=\ln \|y\|_{\dn}$. The latter inequality indicates that 
  \begin{equation*}
      \begin{aligned}
          \|e^{-\Lambda s_y}\Gamma^{-1}z\|_2&\le \tfrac{\delta}{\lambda_{\min}^{1/2}(\Gamma^\top P \Gamma)}.
      \end{aligned}
  \end{equation*}
 Then, since $G_\dn$ is anti-Hurwitz and $0<\varepsilon<1$, we can derive
\begin{equation}
    \begin{aligned}
        & \quad      \|e^{-\Lambda s_y} \Gamma^{-1}z\|_2\leq \tfrac{\sigma}{\lambda_{\min}^{1/2}(\Gamma^\top P \Gamma)}\\
        & \Rightarrow
        \|e^{-\Lambda\ln\varepsilon}e^{-\Lambda s_y} \Gamma^{-1}z\|_2\leq \tfrac{\sigma\lambda_{\max}(e^{-\Lambda\ln\varepsilon})}{\lambda_{\min}^{1/2}(\Gamma^\top P \Gamma)}\\
        &\Rightarrow
        \|\Gamma e^{-\Lambda\ln\varepsilon}e^{-\Lambda s_y} \Gamma^{-1}z\|\leq \tfrac{\sigma\lambda_{\max}(e^{-\Lambda\ln\varepsilon})\lambda_{\max}^{1/2}(\Gamma^\top P \Gamma)}{\lambda_{\min}^{1/2}(\Gamma^\top P \Gamma)}\\
        &
        \Rightarrow
        \|\dn(-\ln\varepsilon\|y\|)z\|\le \delta\lambda_{\max}(e^{-\Lambda\ln\varepsilon})\tfrac{\lambda_{\max}(\Gamma^\top P \Gamma)}{\lambda_{\min}(\Gamma^\top P \Gamma)}.
    \end{aligned}
\end{equation}
The latter inequality indicates that for a given $0<\varepsilon<1$, there exists a sufficiently small $\delta>0$ such that 
$$
\|\dn(-\ln\varepsilon\|y\|)z\|=\| \dn(\ln \tfrac{\|z\|_\dn}{\varepsilon\|y\|_{\dn}}) \dn(-\ln\|z\|_\dn)z\|\leq 1.$$
Taking into account $\|\dn(-\ln \|z\|_{\dn})z\|=1$, we obtain
$$
1\geq \|\dn(-\ln\varepsilon\|y\|_\dn)z\|\geq \left\lfloor\dn\left(\ln\tfrac{\|z\|_\dn}{\varepsilon\|y\|_\dn}\right)\right\rfloor.$$ 
Hence, using  \eqref{eq:d_bound}, we conclude 
  $\ln\tfrac{\|z\|_\dn}{\varepsilon\|y\|_\dn}\leq 0$. 
 The proof is complete.
	$\hfill \square$
\end{proof}
}

The homogeneous function and vector field are defined by following the paper  \cite{kawski1991}.
\begin{definition}
	\textit{A  vector field $f:\R^n\mapsto \R^n$ (resp., a function $h:\R^n \!\mapsto\! \R$) is said to be $\dn$-homogeneous of degree $\mu\!\in\! \R$ if
	\[
	f(\dn(s))=e^{\mu s} \dn(s) f(x), \quad \forall x\in\R^n, \quad \forall s\in \R,
	\]
	\[
	(\text{resp., } h(\dn(s))=e^{\mu s} h(x), \quad \forall x\in\R^n, \quad \forall s\in \R),
	\]
	where $\dn$ is a linear continuous dilation in $\R^n$.
}
\end{definition}

For a linear vector field $Ax$, its homogeneity is revealed in \cite{polyakov2019IJRNC} and \cite{zimenko_etal_2020_TAC}. The properties are listed as follows:

\begin{lemma}\cite{polyakov2019IJRNC,zimenko_etal_2020_TAC}\label{lem:nilpotent}
\textit{Let $\dn$ be a dilation. The linear vector field $Ax$ with $x\in\mathbb{R}^n$ and $A\in\mathbb{R}^{n\times n}$ is $\dn$-homogeneous of degree $\mu\in\mathbb{R}$ if and only if
\begin{equation}\label{eq:A}
AG_\dn = (\mu I + G_\dn)A,
\end{equation}
where $G_\dn\in\mathbb{R}^{n\times n}$ is a generator of $\dn$. Moreover, the following two claims are equivalent:}
\end{lemma}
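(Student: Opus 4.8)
The plan is to establish the two equivalent claims of Lemma \ref{lem:nilpotent} by first proving the characterization \eqref{eq:A} and then showing that the stated equivalences follow from spectral properties of $G_\dn$. For the main identity, I would start directly from the definition of $\dn$-homogeneity of the vector field $Ax$: the requirement $A\dn(s)x = e^{\mu s}\dn(s)Ax$ must hold for all $x\in\R^n$ and all $s\in\R$, which (since it is linear in $x$) is equivalent to the matrix identity $A\dn(s) = e^{\mu s}\dn(s)A$ for all $s$. Substituting $\dn(s) = e^{sG_\dn}$ gives $Ae^{sG_\dn} = e^{\mu s}e^{sG_\dn}A = e^{s(\mu I + G_\dn)}A$. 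The standard way to extract \eqref{eq:A} is to differentiate both sides with respect to $s$ and evaluate at $s=0$, yielding $AG_\dn = (\mu I + G_\dn)A$. For the converse, I would note that \eqref{eq:A} can be iterated: $AG_\dn^k = (\mu I + G_\dn)^k A$ for all $k\in\mathbb{N}$ (by an easy induction, commuting $A$ past one factor of $G_\dn$ at a time), and then summing the power series $\sum_k s^k/k!\, (\cdot)$ on both sides recovers $A e^{sG_\dn} = e^{s(\mu I+G_\dn)}A$, i.e.\ the homogeneity identity.

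The second part — the "following two claims are equivalent" — is where I expect the real content to lie, since the excerpt cuts off before those claims are displayed. Based on the references \cite{polyakov2019IJRNC,zimenko_etal_2020_TAC}, I anticipate the claims concern (i) existence of a dilation $\dn$ making $Ax$ homogeneous of a \emph{negative} (or nonzero) degree $\mu$, and (ii) nilpotency of $A$ — hence the label \texttt{lem:nilpotent}. The bridge is the identity $AG_\dn = (\mu I + G_\dn)A$, which says $A$ intertwines $G_\dn$ with $\mu I + G_\dn$; consequently $A$ maps each generalized eigenspace of $G_\dn$ associated with eigenvalue $\lambda$ into the generalized eigenspace associated with $\lambda + \mu$. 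If $\mu\neq 0$, iterating this shift $n$ times (where $n = \dim$) must eventually leave the spectrum of $G_\dn$, forcing $A^n = 0$. Conversely, if $A$ is nilpotent and $(A,B)$ controllable (or $A$ in a suitable canonical form), one constructs $G_\dn$ explicitly so that \eqref{eq:A} holds with a prescribed $\mu$ — for an integrator chain this is the classical weighted dilation $G_\dn = \operatorname{diag}\{1-(n-i)\mu\}$ adjusted to make $A$ shift-by-$\mu$.

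The key steps, in order, are: (1) rewrite vector-field homogeneity as the matrix identity $A\dn(s) = e^{\mu s}\dn(s)A$; (2) differentiate at $s=0$ to get \eqref{eq:A}; (3) for the converse, iterate \eqref{eq:A} and re-sum the exponential series; (4) read \eqref{eq:A} as an intertwining relation and translate it into the eigenvalue-shift picture for $G_\dn$; (5) derive nilpotency of $A$ from the shift when $\mu\neq 0$, and construct $G_\dn$ from a Jordan/canonical form of nilpotent $A$ for the reverse implication. The main obstacle will be step (5): making the eigenvalue-shift argument fully rigorous when $G_\dn$ is not diagonalizable requires working with generalized eigenspaces and the fact that $A$ respects the associated filtration, and the explicit construction of $G_\dn$ from a nilpotent $A$ needs the controllability hypothesis (or the Brunovsky canonical form) to guarantee that the blocks line up so that a single scalar degree $\mu$ works simultaneously across all controllability indices. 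I would handle this by reducing to the controllable canonical form first, where the construction is transparent, and then invoking invariance of homogeneity under linear coordinate changes.
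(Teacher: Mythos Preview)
The paper does not prove Lemma~\ref{lem:nilpotent}; it is quoted verbatim from the cited references without proof, so there is no ``paper's own proof'' to compare against. Your proposal therefore cannot be matched to anything in the present paper, but it can be assessed on its own merits.

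Your argument is essentially the standard one found in the cited literature and is correct. Two minor points: (i) the eigenvalue shift goes the other way --- from $AG_\dn=(\mu I+G_\dn)A$ one deduces that $A$ maps the generalized eigenspace of $G_\dn$ at $\lambda$ into the one at $\lambda-\mu$, not $\lambda+\mu$; this does not affect the nilpotency conclusion. (ii) For the converse direction (nilpotent $A$ $\Rightarrow$ existence of a dilation with $\mu\neq 0$), no controllability hypothesis is needed: the lemma concerns $A$ alone, not a pair $(A,B)$. Put $A$ in Jordan form; on each nilpotent Jordan block of size $k$ take $G_\dn$ diagonal with entries $g_1,\,g_1+\mu,\,\ldots,\,g_1+(k-1)\mu$, choosing $g_1>0$ large enough that all entries are positive (so $G_\dn$ is anti-Hurwitz). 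This satisfies \eqref{eq:A} block by block for any prescribed $\mu\neq 0$, so your worry about aligning ``controllability indices'' is unnecessary here.
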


\begin{itemize}
\item \textit{There exists a  dilation $\mathbf{d}(s)=e^{s G_{\mathbf{d}}}$ in $\mathbb{R}^n$ such that the vector field $x\mapsto Ax$ is $\mathbf{d}$-homogeneous of degree $\mu \neq 0$.}
\item \textit{A is nilpotent.}
\end{itemize}

A homogeneous system with a non-zero degree exhibits finite-time (negative) or nearly fixed-time stability \cite{bhat_etal_2005_MCSS}. A state feedback homogeneous stabilization on an LTI system has been proposed in \cite{zimenko_etal_2020_TAC} and \cite{polyakov2020book}.

\begin{theorem}\cite{zimenko_etal_2020_TAC, polyakov2020book}\label{thm:hom_linear}
	\textit{If the linear equation \vspace{-1mm}
	\begin{equation}\label{eq:G0}
		AG_0+BY_0=G_0A+A, \quad G_0B=\boldsymbol{0}.\vspace{-1mm}
	\end{equation}
	has a solution $G_0\in \R^{n\times n}$ and $Y_0\in \R^{m\times n}$ such that 
	$G_0-I_n$ is invertible and for any $\mu\in [-1,1/n]$ the matrix $G_{\dn}=I_n+\mu G_0$ is anti-Hurwitz, then the linear system \eqref{eq:lin_sys} with  the control \vspace{-1mm}
	\begin{equation}\label{eq:hom_con_P}
		u(x)\!=\!K_{0} x+\|x\|_{\dn}^{1+\mu}K \dn \left(-\ln \|x\|_{\dn}\right) x, \ K\!=\!Y X^{-1} \vspace{-1mm}
	\end{equation}
	is $\dn$-homogeneous of degree $\mu$ and globally asymptotically stable provided that 
	$K_{0}=Y_0(G_0-I_n)^{-1}$, $A_0=A+BK_0$ is nilpotent, $X \in \mathbb{R}^{n \times n}, Y \in \mathbb{R}^{m \times n}$ satisfy  the following algebraic system \vspace{-1mm}
	\begin{equation}
		\left\{\!\begin{aligned}
			&\!X A_{0}^{\top}\!+\!A_{0} X\!+\!Y^{\top} B^{\top}\!+\!B Y\!+\!\rho\!\left(X G_{\dn}^{\top}\!+\!G_{\dn} X\right)\!=\!\mathbf{0} \\
			&\!X G_{\dn}^{\top}+G_{\dn} X \succ 0, \quad X \succ 0
		\end{aligned}\right. \vspace{-1mm}
		\label{eq:LMI0}
	\end{equation}
	and  the canonical homogeneous norm $\|\cdot\|_{\dn}$ is induced by the norm $\|x\|\!=\!$ $\sqrt{x^{\top} X^{-1} x}$. 
	Moreover,  the closed-loop system is: globally uniformly finite-time stable for $\mu<0$; globally uniformly exponentially stable for $\mu=0$;  globally uniformly nearly fixed-time stable \footnote{The  system \eqref{eq:lin_sys} is globally uniformly nearly fixed-time stable if it is globally uniformly Lyapunov stable  and $\forall r>0, \exists T_r>0: \|x(t)\|<r, \forall t\geq T_r$ independently of $x_0\in \R^n$.} for $\mu>0$. 
}
\textit{Moreover, $\|\cdot\|_{\dn}$ is a Lyapunov function of the system:
	\begin{equation}\label{eq:norm_dt}
		\tfrac{d}{dt} \|x\|_{\dn}=-\rho\|x\|_{\dn}^{1+\mu}.	
	\end{equation}
}
\end{theorem}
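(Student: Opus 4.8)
The plan is to prove the result in three stages: (i) the closed-loop vector field $f(x):=A_0x+B\tilde u(x)$, with $A_0=A+BK_0$ and $\tilde u(x):=\|x\|_{\dn}^{1+\mu}K\dn(-\ln\|x\|_{\dn})x$, is $\dn$-homogeneous of degree $\mu$; (ii) the derivative of the canonical homogeneous norm along the closed loop equals $-\rho\|x\|_{\dn}^{1+\mu}$, as asserted in \eqref{eq:norm_dt}; (iii) integrating this scalar equation yields the three convergence regimes. For stage (i), I would first establish that the linear field $A_0x$ is $\dn$-homogeneous of degree $\mu$: substituting $Y_0=K_0(G_0-I_n)$ into the first relation of \eqref{eq:G0} and using $G_0B=\mathbf{0}$ gives $A_0G_0=A_0+G_0A_0$, whence $G_\dn=I_n+\mu G_0$ satisfies $A_0G_\dn=(\mu I_n+G_\dn)A_0$, i.e.\ condition \eqref{eq:A} of Lemma~\ref{lem:nilpotent}. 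Next, $G_0B=\mathbf{0}$ gives $G_\dn B=B$, hence $\dn(s)B=e^{sG_\dn}B=e^sB$; combining this with $\|\dn(s)x\|_{\dn}=e^s\|x\|_{\dn}$ (which yields $\tilde u(\dn(s)x)=e^{(1+\mu)s}\tilde u(x)$) and the homogeneity of $A_0$, a direct computation gives $f(\dn(s)x)=e^{\mu s}\dn(s)f(x)$. I would also note here that, by congruence with $X^{-1}$, the second relation of \eqref{eq:LMI0} yields $X^{-1}G_\dn+G_\dn^\top X^{-1}\succ 0$, i.e.\ the monotonicity condition \eqref{eq:mon_cond} for $P=X^{-1}$ (Proposition~\ref{prop: 1}), so that $\|\cdot\|_{\dn}$ is single-valued and $C^1$ on $\mathbb{R}^n\setminus\{\mathbf{0}\}$.

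For stage (ii), set $s=\ln\|x\|_{\dn}$, $z=\dn(-s)x\in\mathbb{S}(1)$ and $P=X^{-1}$, and differentiate the defining identity $z^\top Pz=1$ along the trajectories. Using $\dn(-s)A_0\dn(s)=e^{\mu s}A_0$ and $\dn(-s)B=e^{-s}B$ one gets $\dn(-s)f(x)=e^{\mu s}(A_0+BK)z$, hence
\[
\tfrac{d}{dt}\|x\|_{\dn}=\|x\|_{\dn}^{1+\mu}\,\frac{z^\top P(A_0+BK)z}{z^\top PG_\dn z}.
\]
Pre- and post-multiplying the first relation of \eqref{eq:LMI0} by $X^{-1}$ and using $K=YX^{-1}$ gives $P(A_0+BK)+(A_0+BK)^\top P=-\rho\bigl(PG_\dn+G_\dn^\top P\bigr)$, so the quotient above equals $-\rho$ identically — its denominator being $\tfrac{1}{2}z^\top(PG_\dn+G_\dn^\top P)z>0$ — which proves \eqref{eq:norm_dt}.

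For stage (iii), let $V(t):=\|x(t)\|_{\dn}$, so $\dot V=-\rho V^{1+\mu}$ with $\rho>0$. Integration gives $V(t)=V(0)e^{-\rho t}$ if $\mu=0$ (global exponential stability), and $V^{-\mu}(t)=V^{-\mu}(0)+\mu\rho t$ if $\mu\neq 0$: the latter yields the finite settling time $T(x_0)=\|x_0\|_{\dn}^{-\mu}/(-\mu\rho)$ for $\mu<0$, and, for $\mu>0$, $V(t)<r$ for all $t\ge r^{-\mu}/(\mu\rho)$ independently of $x_0$ (nearly fixed-time). Global Lyapunov stability and the bound $\|x(t)\|\le\alpha(\|x_0\|)$ with $\alpha\in\mathcal{K}$ then follow from the monotonicity of $V$ along trajectories and the equivalences \eqref{eq:hom_eclidean} between $\|\cdot\|$ and $\|\cdot\|_{\dn}$ of Proposition~\ref{prop:3}. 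Finally, since $f$ is continuous on $\mathbb{R}^n\setminus\{\mathbf{0}\}$ the Filippov solutions are classical away from the origin, where the estimates above apply; and in the finite-time case the origin is an equilibrium reached in finite time and kept thereafter.

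The step I expect to be the main obstacle is stage (ii): differentiating the implicitly-defined canonical homogeneous norm along the discontinuous closed loop and recognizing that the algebraic Lyapunov equation in \eqref{eq:LMI0}, after the $X^{-1}$-congruence, is precisely the condition forcing the Rayleigh-type quotient to be the constant $-\rho$. The homogeneity bookkeeping in stage (i) is routine but must be done carefully because of the interplay between $G_0$, $Y_0$, $K_0$ and $G_\dn$, while stage (iii) reduces to a scalar ODE computation together with the norm equivalences already provided by Proposition~\ref{prop:3}.
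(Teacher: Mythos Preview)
Your proof is correct and follows the standard approach from the cited references; however, note that the paper itself does not give a proof of this theorem---it is stated as a preliminary result with citation to \cite{zimenko_etal_2020_TAC, polyakov2020book} and immediately followed by remarks on feasibility of \eqref{eq:LMI0}, with no proof environment. Your three-stage argument (homogeneity of $f$ via the identity $A_0G_{\dn}=(\mu I_n+G_{\dn})A_0$ and $G_{\dn}B=B$; the exact derivative \eqref{eq:norm_dt} obtained by differentiating $\|\dn(-s)x\|^2=1$ and recognizing that the $X^{-1}$-congruence of the first line of \eqref{eq:LMI0} is precisely $(A_0+BK)^\top P+P(A_0+BK)=-\rho(PG_{\dn}+G_{\dn}^\top P)$; explicit integration of the scalar ODE) is exactly the argument one finds in those sources, and in fact the paper implicitly uses the very same ingredients later in the proof of Theorem~\ref{thm:main} (the identities $A_0\dn(s)=e^{\mu s}\dn(s)A_0$, $\dn(s)B=e^sB$, and the formula $\tfrac{d}{dt}\|x\|_{\dn}=\|x\|_{\dn}\tfrac{x^\top\dn(-s_x)P\dn(-s_x)\dot x}{x^\top\dn(-s_x)PG_{\dn}\dn(-s_x)x}$).
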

If the pair $\{A,B\}$ is controllable, then any solution of the system \eqref{eq:G0} satisfies conditions of the latter theorem for $\mu\leq 1/k$, where $k$ is a minimal number such that $\text{rank}[B,AB,...,A^{k-1}B]=n$.
The feasibility of \eqref{eq:LMI0} has been proven in \cite{polyakov2016IJRNC}, \cite{zimenko_etal_2020_TAC}.
Notice also that the matrix $G_0$ (and respectively, $G_{\dn}$) in Theorem \ref{thm:hom_linear} can always be selected diagonalizable.

\section{Homogeneous Control with State Quantization}\label{section:main}

\subsection{Stability of homogeneous control system with quantized state measurements}
Due to nonlinearity, the design of finite/fixed-time controllers
with quantized state measurements is a non-trivial task. To tackle this problem we propose the  following two step procedure: 

\textit{Step 1}: First, we design a continuous-time closed-loop system with  finite/fixed-time stabilizing feedback. Next, we replace the continuous states with quantized states in the feedback law, and derive some sufficient finite/fixed-time stability conditions of the closed-loop system with quantization. 

\textit{Step 2}: Leveraging the derived quantization conditions, we design a quantization function using a logarithmic quantizer such that finite/fixed-time stabilization is preserved.

The continuous-time homogeneous finite/fixed-time stabilizer \eqref{eq:hom_con_P} is assumed to be designed for the LTI system  by means of  Theorem \ref{thm:hom_linear}. Our goal now is to derive a  sufficient condition for the quantization function to preserve finite/fixed stability of the closed-loop system in the case of quantized state measurements. The homogeneous controller  \eqref{eq:hom_con_P} contains two terms. The linear one $K_0x$ is aimed at homogenization of $A_0=A+BK_0$ with non-zero degree (see Lemma \ref{lem:nilpotent}), while the nonlinear term stabilizes the closed-loop homogeneous system. If $A$ is a nilpotent matrix, then $K_0$ may be equal to zero. Below we study both cases $K_0\neq\mathbf{0}$ and $K_0=\mathbf{0}$, since, in the latter case, the control law also becomes a homogeneous function and restrictions to the quantizer can be relaxed.

\begin{theorem}\label{thm:main}
	\textit{ Let all parameters of the homogeneous feedback \eqref{eq:hom_con_P} be designed as in Theorem \ref{thm:hom_linear}, and define the quantized feedback as:
		 \begin{equation}\label{eq:u_q}
		 	u(\mathfrak{q}(x))=K_0\mathfrak{q}(x)+ \|\mathfrak{q}(x)\|_\dn^{1+\mu}K\dn(-\ln\|\mathfrak{q}(x)\|_\dn)\mathfrak{q}(x)
		 \end{equation}
	 where $\mathfrak{q}:\mathbb{R}^n\mapsto \mathcal{Q}$ is a quantization function.   If $K_0=\mathbf{0}$ and the quantization error satisfies	
	\begin{equation}\label{eq:con1}
		\|\mathfrak{q}(x)-x\|_\dn\le \epsilon \|x\|_\dn, \quad \forall x\in\mathbb{R}^n, \ \epsilon\in\mathbb{R}_+,
	\end{equation}
}	\textit{then, for a sufficiently small $\epsilon$, the system \eqref{eq:q_sys}, \eqref{eq:u_q} is 
}
	\begin{itemize}
		\item \textit{globally uniformly finite-time stable for $\mu<0$;}
		\item \textit{globally uniformly nearly fixed-time stable for $\mu>0$.}
	\end{itemize}
	\textit{
	If $K_0\neq \mathbf{0}$ and the quantization error additionally satisfies
	\begin{equation}\label{eq:con2}
		\|\mathfrak{q}(x)-x\|\le\kappa\|x\|, \quad \forall x\in\mathbb{R}^n, \ \kappa\in\mathbb{R}_+,
	\end{equation}
	 then the system \eqref{eq:q_sys}, \eqref{eq:u_q} is 
}
	\begin{itemize}
		\item \textit{locally uniformly finite-time stable for $\mu<\min\{\underline{\eta}-1, 0\}$;}
		\item \textit{globally uniformly practically fixed-time stable \footnote{
  The system \eqref{eq:lin_sys} is globally  uniformly practically
  \begin{itemize}
      \item Lyapunov stable if  
 $\exists r\!\in\! \mathbb{R}_+, \exists \chi\in\mathcal{K}$ : $\|x(t)\|\!\le\! r + \chi(\|x_0\|)$, $\forall t\!\ge\! t_0$;
  \item fixed-time stable if it is globally uniformly practically Lyapunov stable and $\exists \tilde T>0$ such that $\|x(t)\| \leq r, \;\; \forall t\geq \tilde T,  \ \forall x_0\in \R^n.$
  \end{itemize}
   } for $\mu\!>\!\max\{\overline{\eta}\!-\!1, 0\}$}. 
	\end{itemize}

\end{theorem}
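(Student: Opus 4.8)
\emph{Approach.} The plan is to use the canonical homogeneous norm $W(x)=\|x\|_{\dn}$ (induced by $\|x\|=\sqrt{x^\top X^{-1}x}$) as a common Lyapunov function and to regard the quantized closed loop \eqref{eq:q_sys}, \eqref{eq:u_q} as a perturbation of the continuous-time homogeneous one. Writing $\mathrm{nl}(v)=\|v\|_{\dn}^{1+\mu}K\dn(-\ln\|v\|_{\dn})v$, so that \eqref{eq:hom_con_P} reads $u(x)=K_0x+\mathrm{nl}(x)$, the closed-loop vector field splits as $Ax+Bu(\mathfrak{q}(x))=\big(Ax+Bu(x)\big)+BK_0(\mathfrak{q}(x)-x)+B\big(\mathrm{nl}(\mathfrak{q}(x))-\mathrm{nl}(x)\big)$. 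Since $W$ is $C^1$ on $\R^n\setminus\{\mathbf{0}\}$ and, by \eqref{eq:norm_dt}, $\nabla W(x)^\top\big(Ax+Bu(x)\big)=-\rho\,W^{1+\mu}(x)$, differentiating $W$ along Filippov solutions gives
\[
\tfrac{d}{dt}W\le -\rho\,W^{1+\mu}+\underbrace{\nabla W(x)^\top BK_0(\mathfrak{q}(x)-x)}_{T_1}+\underbrace{\nabla W(x)^\top B\big(\mathrm{nl}(\mathfrak{q}(x))-\mathrm{nl}(x)\big)}_{T_2}.
\]
Because the right-hand side of \eqref{eq:q_sys} is discontinuous, the estimate must be verified for every element of the Filippov set-valued map; this follows by noting that any quantization seed of a cell whose closure contains $x$ still obeys \eqref{eq:con1}--\eqref{eq:con2} at $x$ by continuity, after which convexity of the Filippov set transfers the bound.

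\emph{The decisive step} is to make $T_2$ small relative to $\rho\,W^{1+\mu}$ uniformly in $x$. Using $G_0B=\mathbf{0}$ — hence $G_{\dn}B=B$ and $\dn(s)B=e^{s}B$ — the map $v\mapsto B\,\mathrm{nl}(v)$ is $\dn$-homogeneous of degree $\mu$, while $\nabla W$ satisfies $\nabla W(\dn(s)x)^\top\dn(s)=e^{s}\nabla W(x)^\top$. Rescaling $x$ and $\mathfrak{q}(x)$ onto $\mathbb{S}(1)$ by $\dn(-\ln\|x\|_{\dn})$ and $\dn(-\ln\|\mathfrak{q}(x)\|_{\dn})$, one obtains $T_2=W^{1+\mu}\,g(z,z_q,\tau)$ with $z,z_q\in\mathbb{S}(1)$, $\tau=\ln(\|\mathfrak{q}(x)\|_{\dn}/\|x\|_{\dn})$, and $g(z,z,0)=0$. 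I would then show that \eqref{eq:con1} forces $\tau\to0$ and $z_q-z\to0$ uniformly in $x$ as $\epsilon\to0$: applying $\dn(-\ln\|x\|_{\dn})$ to \eqref{eq:con1} places $\dn(-\ln\|x\|_{\dn})\mathfrak{q}(x)$ within $\epsilon$ of $z$ in $\|\cdot\|_{\dn}$, hence — by \eqref{eq:hom_eclidean} — within $\epsilon^{\underline{\eta}}$ of $z$ in $\|\cdot\|$; uniform continuity of $W$ on a fixed annular neighbourhood of $\mathbb{S}(1)$, together with continuity of $\dn(-\tau)$ at $\tau=0$, then gives the claim. Since $\mathrm{nl}$ and $\nabla W$ are continuous on the compact set $\mathbb{S}(1)$, this yields $|T_2|\le c(\epsilon)\,W^{1+\mu}$ with $c(\epsilon)\to0$. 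I expect this to be the main obstacle, because the homogeneous norm is not subadditive, so the passage from \eqref{eq:con1} to uniform closeness of the angular parts must go through the two-sided equivalences of Proposition \ref{prop:3} and a compactness argument rather than a triangle inequality — the same mechanism as in Lemma \ref{lem:1}.

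\emph{Case $K_0=\mathbf{0}$.} Here $T_1$ vanishes, so for $\epsilon$ small enough $\tfrac{d}{dt}W\le -(\rho-c(\epsilon))W^{1+\mu}=:-\rho'W^{1+\mu}$ with $\rho'>0$ everywhere. Integrating this comparison inequality gives a settling time $W(x_0)^{-\mu}/(-\mu\rho')$, locally bounded in $x_0$, when $\mu<0$ (global uniform finite-time stability), and an entry time into any ball $\{W\le r\}$, $r\le1$, bounded by $r^{-\mu}/(\mu\rho')$ independently of $x_0$ when $\mu>0$ (global uniform nearly fixed-time stability); Lyapunov stability follows since $W$ is non-increasing and equivalent to $\|\cdot\|$ by \eqref{eq:hom_eclidean}.

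\emph{Case $K_0\neq\mathbf{0}$.} I would bound $T_1$ by first noting $\nabla W(x)^\top B=\nabla W\big(\dn(-\ln\|x\|_{\dn})x\big)^\top B$ (again from $\dn(s)B=e^sB$), so $\|\nabla W(x)^\top B\|$ is uniformly bounded by its maximum over $\mathbb{S}(1)$; combined with \eqref{eq:con2} and \eqref{eq:hom_eclidean} this gives $|T_1|\le C\kappa\,W^{\overline{\eta}}$ for $W\ge1$ and $|T_1|\le C\kappa\,W^{\underline{\eta}}$ for $W\le1$. Hence $\tfrac{d}{dt}W\le-(\rho-c(\epsilon))W^{1+\mu}+C\kappa\,W^{\overline{\eta}}$ on $\{W\ge1\}$ and $\tfrac{d}{dt}W\le-(\rho-c(\epsilon))W^{1+\mu}+C\kappa\,W^{\underline{\eta}}$ on $\{W\le1\}$. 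If $\mu<\min\{\underline{\eta}-1,0\}$ then $1+\mu<\underline{\eta}$, so for $\epsilon,\kappa$ small the nominal term dominates on $\{W\le1\}$, making $\{W\le1\}$ forward invariant with finite-time convergence to $\mathbf{0}$ inside it (local uniform finite-time stability — globality fails because on $\{W\ge1\}$ the $W^{\overline{\eta}}$ term with $\overline{\eta}\ge1+\mu$ need not be dominated). If $\mu>\max\{\overline{\eta}-1,0\}$ then $1+\mu>\overline{\eta}$, so there is $R^*$ with $\tfrac{d}{dt}W<0$ on $\{W\ge R^*\}$; thus $W(t)\le\max\{W(x_0),R^*\}$ (practical Lyapunov stability), $\{W\le R^*\}$ is entered in a time bounded independently of $x_0$, and inside a small ball the $W^{\underline{\eta}}$ term with $\underline{\eta}<1+\mu$ dominates so the residual set shrinks with $\kappa$ — i.e. global uniform practical fixed-time stability. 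Converting the $W$-bounds back to $\|\cdot\|$ through \eqref{eq:hom_eclidean} yields all four conclusions in the stated norm.
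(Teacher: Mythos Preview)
Your proposal is correct and follows essentially the same route as the paper's proof: the same Lyapunov function $W=\|x\|_{\dn}$, the same splitting into the nominal term plus $T_1$ (linear quantization error, bounded via \eqref{eq:con2} and Proposition~\ref{prop:3}) and $T_2$ (nonlinear part, controlled by rescaling with $\dn(-\ln\|x\|_{\dn})$ and invoking uniform continuity on a compact set away from the origin). The only cosmetic difference is that you parameterize the $T_2$ estimate through the pair $(z_q,\tau)\in\mathbb{S}(1)\times\mathbb{R}$, whereas the paper keeps $\dn(-s_x)\mathfrak{q}_x$ in an annular neighbourhood of $\mathbb{S}(1)$ and applies the modulus of continuity of $\tilde u$ directly; both lead to the same inequality $\tfrac{d}{dt}\|x\|_{\dn}\le -(\rho-c_2\bar\gamma(\epsilon))\|x\|_{\dn}^{1+\mu}+c_1\|\mathfrak{q}(x)-x\|$ and the identical case analysis.
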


\begin{proof}
	The inequality \eqref{eq:con1} implies that   $\mathfrak{q}(\boldsymbol{0})=\boldsymbol{0}$ and $x=\boldsymbol{0}$ is the equilibrium of  the system.
Since $\|x\|_\dn$ serves as a Lyapunov function for the quantization-free system \eqref{eq:lin_sys}, let us calculate the derivative along with the system using quantization \eqref{eq:q_sys}:
\begin{equation}
	\begin{aligned}
		&\tfrac{d}{dt}\|x\|_\dn = \|x\|_\dn\tfrac{x^\top\dn(-s_{x})P\dn(-s_{x})}{x^\top\dn(-s_{x})PG_\dn \dn(-s_{x})x} \left(Ax+ Bu(\mathfrak{q}(x))\right)\\
		&\!=\!\|x\|_\dn\tfrac{x^\top\dn(-s_{x})P\dn(-s_{x})}{x^\top\dn(-s_{x})PG_\dn \dn(-s_{x})x} \!\left(Ax\!+\! Bu(x) \!+\! Bu(\mathfrak{q}_x)\!-\!Bu(x)\right)
	\end{aligned}
\end{equation}
where $s_{x}=\ln\|x\|_\dn$ and $\mathfrak{q}_x=\mathfrak{q}(x)$.  The equation \eqref{eq:G0} leads to $A_0\dn(s)=e^{\mu s}\dn(s)A_0$, $\dn(s)B = e^{s}B, \forall s\in \R$ and
\begin{equation}
	\tfrac{d}{dt}\|x\|_\dn = -\rho\|x\|_\dn^{1+\mu}+\tfrac{x^\top\dn(-s_{x})PB\left(u(\mathfrak{q}_x)-u(x)\right)}{x^\top\dn(-s_{x})PG_\dn \dn(-s_{x})x}. 
\end{equation}
The control $u(x)$ consists of two parts: the linear part $K_0x$ and the homogeneous part $\tilde{u}(x) = \|x\|_{\dn}^{1+\mu}K\dn(-\ln\|x\|_{\dn})x$. Therefore, we derive
\begin{equation}
	\begin{aligned}
		\tfrac{d}{dt}\|x\|_\dn \!=\! -\rho\|x\|_\dn^{1+\mu}&+\tfrac{x^\top\dn(-s_{x})PBK_0(\mathfrak{q}_x-x)}{x^\top\dn(-s_{x})PG_\dn \dn(-s_{x})x}\\
		& + \tfrac{x^\top\dn(-s_{x})PB\left(\tilde{u}(\mathfrak{q}_x)-\tilde{u}(x)\right) }{x^\top\dn(-s_{x})PG_\dn \dn(-s_{x})x}.
	\end{aligned}
\end{equation}
Referring the definition of canonical homogeneous norm $\|\dn(-s_{x})x\|=1$, then using Cauchy–Schwarz inequality, it yields that
\begin{equation}
	\tfrac{d}{dt}\|x\|_\dn \!\le\! -\!\rho\|x\|_\dn^{1+\mu}\!+\!c_1\|\mathfrak{q}_x\!-\!x\| \!+\! \tfrac{x^\top\dn(-s_{x})PB\left(\tilde{u}(\mathfrak{q}_x)\!-\!\tilde{u}(x)\right) }{x^\top\dn(-s_{x})PG_\dn \dn(-s_{x})x},
\end{equation}
where $c_1 = \tfrac{\sqrt{\lambda_{\max}(K_0^\top B^\top PBK_0)}}{\lambda_{\min}(P^{1/2}G_\dn P^{-1/2} + P^{-1/2}G_\dn^\top P^{1/2})}$.

Since $B\tilde{u}(\dn(s)z) = e^{(1+\mu)s}B\tilde{u}(z), \forall s\in \R, \forall z\in \R^n$ then 
\begin{equation}\label{eq:d_norm_1}
	\begin{aligned}
		\tfrac{d}{dt}\|x\|_\dn \!\le\!& -\rho\|x\|_\dn^{1+\mu}+c_1\|\mathfrak{q}_x-x\|\\
		& + c_2\|x\|_\dn^{1+\mu}\|\tilde{u}\left(\dn(-s_x)\mathfrak{q}_x\right)\!-\!\tilde{u}(\dn(-s_{x})x)\|,
	\end{aligned}
\end{equation}
where $c_2 = \tfrac{\sqrt{\lambda_{\max}(K^\top B^\top PBK)}}{\lambda_{\min}(P^{1/2}G_\dn P^{-1/2} + P^{-1/2}G_\dn^\top P^{1/2})}$.

Taking into account the monotonicity of the dilation $\dn$ (see Definition \eqref{def:monocity}), the condition \eqref{eq:con1} implies that 
\begin{equation}\label{eq:con_d}
	\begin{aligned}
 &\quad \;\;e^{-\ln\epsilon}e^{-\ln\|x\|_\dn}\|\mathfrak{q}_x-x\|_{\dn}\le 1\\
 &\Rightarrow\|\dn(-\ln\epsilon)\dn(-\ln\|x\|_\dn)(\mathfrak{q}_x-x)\|_{\dn}\le 1\\
	    &\Rightarrow \|\dn(-\ln\epsilon)\dn(-\ln\|x\|_\dn)(\mathfrak{q}_x-x)\|\le 1\\
     &\Rightarrow \lfloor\dn(-\ln\epsilon)\rfloor\cdot\|\dn(-\ln\|x\|_\dn)(\mathfrak{q}_x-x)\|\le 1 
	\end{aligned}
\end{equation}
Due to \eqref{eq:d_bound}, there exists $\alpha\in\mathcal{K}$ such that $\tfrac{1}{\lfloor\dn(-\ln\epsilon)\rfloor}\le \alpha(\epsilon)$ leading to
  \begin{align*}
  	\|\dn(-\ln\|x\|_\dn)\mathfrak{q}_x\|\! &=\! \|\dn(-\ln\|x\|_\dn)x \!+\! \dn(-\ln\|x\|_\dn)(\mathfrak{q}_x\!-\!x)\|\\
  	&\le 1+ \alpha(\epsilon)
  \end{align*}
for all $x\neq 0$ with a sufficiently small $\epsilon>0$.

Since $\dn(-\ln\|x\|_\dn)x$ lies on the unit sphere for any $x\neq 0$, for a sufficiently small $\epsilon>0$ 
there exists a compact set, which does not contain the origin, such that $\dn(-\ln\|x\|_\dn)\mathfrak{q}_x$ and $\dn(-\ln \|x\|_{\dn})x$ always belong to this compact set for all $x\neq \mathbf{0}$. Since   $\tilde{u}$ is continuous on $\mathbb{R}^n\backslash\{\mathbf{0}\}$, then it is uniformly continuous on the compact set. Therefore, there exists a class-$\mathcal{K}$ function $\gamma$, such that
\begin{equation}\label{eq:class_K}
		\!\left\|   \tilde{u}(\dn(-s_x)\mathfrak{q}_x)\!-\!\tilde{u}(\dn(-s_{x})x)\right\|
		\!\!\le\! \gamma\!\left(\|\dn(-s_{x})(\mathfrak{q}_x\!-\!x)\| \right)\!, \forall x\!\neq\! \mathbf{0}
\end{equation}
Then, using the latter inequality and \eqref{eq:con_d}, we derive  
\begin{equation}\label{eq:d_norm_2}
	\begin{aligned}
		\tfrac{d}{dt}\|x\|_\dn \!\le\! -(\rho-c_2\overline{\gamma}(\epsilon))\|x\|_\dn^{1+\mu}+c_1\|\mathfrak{q}_x-x\|,
	\end{aligned}
\end{equation}
where $\overline{\gamma}=\gamma\circ\alpha \in\mathcal{K}$.

For  $K_0=\boldsymbol{0}\Rightarrow c_1 = 0$ we derive 
 \begin{equation}
 	\begin{aligned}
 		\tfrac{d}{dt}\|x\|_\dn \!\le\! -(\rho-c_2\overline{\gamma}(\epsilon))\|x\|_\dn^{1+\mu}.
 	\end{aligned}
 \end{equation}
and if  $0<\epsilon< \overline{\gamma}^{-1}\left(\tfrac{\rho}{c_2}\right)$, where $\overline{\gamma}^{-1}$ is an inverse function of $\overline{\gamma}$, then the system is globally finite-time stable for $\mu<0$, exponentially stable for $\mu=0$ and nearly fixed-time stable for $\mu>0$.

If $K_0\neq \mathbf{0}$  (i.e., $c_1\neq 0$), let us consider the relation between the Euclidean norm and the canonical homogeneous norm given in Proposition \ref{prop:3}. It yields that 
\begin{equation}
	 \left\{
	\begin{aligned}
		&\|\mathfrak{q}\!-\!x\| \!\le\! \kappa\|x\|_\dn^{\underline{\eta}}, \ \|x\|_\dn\!\le\! 1,\\
		&\|\mathfrak{q}\!-\!x\|\!\le\! \kappa\|x\|_\dn^{\overline{\eta}}, \  \|x\|_\dn\!\ge\! 1,
	\end{aligned}
	\right.
\end{equation}

Using the latter estimate and \eqref{eq:d_norm_2}, we derive that
\begin{equation}\label{eq:d_norm_3}
	\tfrac{d\|x\|_\dn}{dt} \!\le\!\!\left\{\begin{aligned}
		 &\!\!-\!\!\left[\rho\!-\!c_2\overline{\gamma}(\epsilon) \!-\! c_1\kappa\|x\|_\dn^{\underline{\eta}-(1+\mu)}\right]\|x\|_\dn^{1+\mu},\ \|x\|_\dn\!\le\!1,\\
		 &\!\!-\!\!\left[\rho\!-\!c_2\overline{\gamma}(\epsilon) \!-\! c_1\kappa\|x\|_\dn^{\overline{\eta}-(1+\mu)}\right]\|x\|_\dn^{1+\mu},\ \|x\|_\dn\!\ge\!1.
	\end{aligned}
\right.
\end{equation}
for some $\kappa>0$. Let  $0<\epsilon < \overline{\gamma}^{-1}(\frac{\rho}{c_2})$, and we have the following two cases:
\begin{enumerate}
    \item 
If $\mu > \max\{\overline{\eta}-1,\underline{\eta}-1\}=\overline{\eta}-1$
we have $\|x\|_\dn^{\overline{\eta}-(1+\mu)} \to 0$ and $\|x\|_\dn^{\underline{\eta}-(1+\mu)} \to 0$ as $\|x\|_\dn \to \infty$. Thus, the system is globally practically fixed-time stable due to $\mu>0$.

\item If $\mu \le \min\{\overline{\eta}-1, \underline{\eta}-1\}=\underline{\eta}-1$
then  $\|x\|_\dn^{\overline{\eta}-(1+\mu)} \to 0$ and $\|x\|_\dn^{\underline{\eta}-(1+\mu)} \to 0$ as $\|x\|_\dn \to 0$. Hence, the system \eqref{eq:q_sys} is locally finite-time stable if $\mu<0$.
\end{enumerate}

The proof is complete.
$\hfill \square$
\end{proof}

For $K_0\neq 0$, the region of attraction (resp., the attractive set) of locally finite-time (resp., globally practically fixed-time) stable system \eqref{eq:q_sys}, \eqref{eq:u_q} with $\mu<0$ (resp. $\mu>0)$ can be tuned arbitrarily large (resp., small) by a selection of a small enough 
$0<\epsilon < \overline{\gamma}^{-1}(\frac{\rho}{c_2})$ and a small enough  $\kappa>0$.

\begin{remark}
	\textit{
	The condition  $\|\mathfrak{q}(x) - x\|\le\epsilon\|x\|$ is sufficient for exponential stabilization of a linear control system  with states quantization (see, e.g., \cite{Fu_etal_2005_TAC}, \cite{bullo_Liberzon2006TAC}). Similarly, for $\dn$-homogeneous LTI system with the $\dn$-homogeneous quantized feedback, the finite/nearly fixed-time stabilization condition is expressed in the same form but utilizing the canonical homogeneous norm \eqref{eq:con1}. Furthermore, for $G_\dn = I_n$ (i.e., $\mu=0$)  we have $\|x\|_{\dn}=\|x\|$. This corresponds to the linear case considered above.} 
\end{remark}

At first sight, the restriction to a quantizer in Theorem \ref{thm:main} is not easy to check since it is expressed in terms of the canonical homogeneous norm defined implicitly. However, in the next section, we show that using the well-known logarithmic quantizer, the required condition can be satisfied.

\subsection{Finite/fixed-time stabilization under logarithmic quantization}
The objective of this section is to study the feasibility of finite/fixed-time stabilization problem with state quantization using  logarithmic quantizer (see Fig. \ref{fig:quantizer}). Let us briefly recall the corresponding definitions.
The distinguishing property of a logarithmic quantizer is that the quantization error decreases as the states approach the origin. This allows the convergence/stability property of the original system to be preserved.

\textbf{\textit{Logarithmic quantizer}} is described by
\begin{equation}\label{eq:log_quantizer}
	q(\phi)\!=\!\left\{ 
	\begin{aligned}	
	&\nu^i \zeta_0, \  \tfrac{1}{1+\delta} \nu^i \zeta_0 \!<\! \phi \!\leq \tfrac{1}{1-\delta} \nu^i \zeta_0,  i\!=\!0, \pm 1, \pm 2, \ldots \\ 
	&	0, \  \qquad   \phi = 0\\ 
	& -q(-\phi), \  \phi<0
	\end{aligned}\right.
\end{equation}
where $q(\phi), \phi\in\mathbb{R}$, $\zeta_0\in\mathbb{R}_+$, $\nu \in(0,1)$ represents the quantization density and $\delta=(1-\nu) /(1+\nu)$.
A small $\nu$ (resp., large $\delta$) implies a coarse quantization, but a large $\nu$ (resp., a small $\delta$) means a dense quantization. 

For the logarithmic quantizer, the quantization error is sector bounded \cite{Fu_etal_2005_TAC}:
\begin{equation}\label{eq:log_q}
	|q(\phi) - \phi|\le \delta|\phi|, \ \delta \in (0,1)
\end{equation} 
and vanishing as the state goes to the origin.
\begin{figure}[ht]
	\centering
\includegraphics[width=0.4\textwidth]{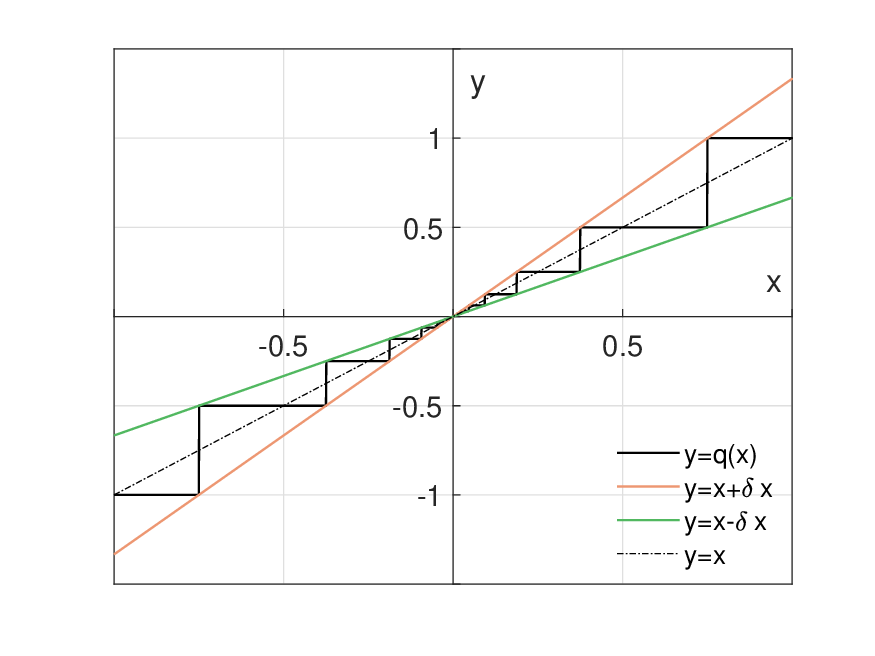}
	\caption{  Logarithmic quantizer $q(x)$.}
	\label{fig:quantizer}
\end{figure}
As shown in \eqref{eq:log_q}, the quantization error of the logarithmic quantizer is formulated as an absolute value. To validate the stability condition \eqref{eq:con1}, a new relation between $|\cdot|$ and $\|\cdot\|_\dn$ has been derived in Lemma \ref{lem:1}.
Based on the results of Lemma \ref{lem:1}, we construct a logarithmic quantization function $\mathfrak{q}(x)$ that employs the transformation matrix $\Gamma$ and fulfills the conditions of Theorem \ref{thm:main}.

\begin{corollary}\label{cor:2}
	\textit{Let $G_{\dn}$ be a diagonizable generator of the linear dilation in $\R^n$:
 \[
 \exists \Gamma\!\in\! \R^{n\times n} : \;\;G_{\dn}\!=\!\Gamma \Lambda \Gamma^{-1}, \;\; 
 \Lambda\!=\!\mathrm{diag}(\lambda_1,\lambda_2,...,\lambda_n)\!\succ\! 0
 \]
 and the canonical homogeneous norm $\|x\|_{\dn}$ be induced by the weighted Euclidean norm $\|x\|=\sqrt{x^{\top}Px}$, where $x\in \R^n$ and $0\prec P=P^{\top}\in \R^{n\times n}$ satisfies \eqref{eq:mon_cond}. If  $q$ is a logarithmic quantization function with parameter $\delta>0$ and}
 \begin{equation}\label{eq:log_quan_Gamma}
     \mathfrak{q}(x) = \Gamma[q(\xi_{x,1}), q(\xi_{x,2}),\cdots, q(\xi_{x,n})]^\top,
 \end{equation}
 \textit{
 where $\xi_{x} = \Gamma^{-1}x = [\xi_{x,1}, \xi_{x,2},\cdots, \xi_{x,n}]^\top$,  $x\in\mathbb{R}^n$,
then for any $\epsilon>0$ and any $\kappa>0$ there exists $\delta>0$ such that the inequalities \eqref{eq:con1}, \eqref{eq:con2} are fulfilled simultaneously.
}
\end{corollary}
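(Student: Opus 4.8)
The plan is to establish the two inequalities \eqref{eq:con1} and \eqref{eq:con2} separately from the single scalar sector bound \eqref{eq:log_q} applied component-wise in the $\Gamma$-coordinates, and then observe that a single $\delta$ small enough works for both. Write $\xi_x = \Gamma^{-1}x$ and $\xi_{\mathfrak{q}(x)} = \Gamma^{-1}\mathfrak{q}(x) = [q(\xi_{x,1}),\dots,q(\xi_{x,n})]^\top$. By \eqref{eq:log_q} applied to each coordinate, $|\xi_{\mathfrak{q}(x),i} - \xi_{x,i}| = |q(\xi_{x,i}) - \xi_{x,i}| \le \delta|\xi_{x,i}|$ for all $i$. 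This is exactly the hypothesis feeding Lemma \ref{lem:1}: setting $z = \mathfrak{q}(x)-x$ and $y = x$, we have $|\xi_{z,i}| \le \delta|\xi_{y,i}|$ componentwise.

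For \eqref{eq:con1}, I would invoke Lemma \ref{lem:1} directly: given any $0<\epsilon<1$, the lemma furnishes a threshold $\delta_0(\epsilon)>0$ such that $|\xi_{z,i}|\le \delta_0|\xi_{y,i}|$ for all $i$ implies $\|z\|_\dn \le \epsilon\|y\|_\dn$, i.e. $\|\mathfrak{q}(x)-x\|_\dn \le \epsilon\|x\|_\dn$. (The case $x=\mathbf{0}$ is trivial since $\mathfrak{q}(\mathbf{0})=\mathbf{0}$, and for $\epsilon\ge 1$ any admissible $\delta$ works.) So any $\delta \le \delta_0(\epsilon)$ secures \eqref{eq:con1}. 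For \eqref{eq:con2}, I would argue by a direct norm estimate rather than through the homogeneous norm: from $|\xi_{z,i}|\le\delta|\xi_{y,i}|$ we get $\|\xi_z\|_2 \le \delta\|\xi_y\|_2$, hence $\|\mathfrak{q}(x)-x\|_2 = \|\Gamma\xi_z\|_2 \le \|\Gamma\|_2\,\delta\,\|\xi_y\|_2 = \|\Gamma\|_2\,\delta\,\|\Gamma^{-1}x\|_2 \le \delta\,\|\Gamma\|_2\|\Gamma^{-1}\|_2\,\|x\|_2$, and converting between $\|\cdot\|_2$ and the weighted norm $\|\cdot\| = \sqrt{x^\top P x}$ via $\lambda_{\min}(P)$ and $\lambda_{\max}(P)$ yields $\|\mathfrak{q}(x)-x\| \le \delta\, c_\Gamma\sqrt{\lambda_{\max}(P)/\lambda_{\min}(P)}\,\|x\|$ with $c_\Gamma = \|\Gamma\|_2\|\Gamma^{-1}\|_2$. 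Choosing $\delta$ also at most $\kappa / \big(c_\Gamma\sqrt{\lambda_{\max}(P)/\lambda_{\min}(P)}\big)$ gives \eqref{eq:con2}.

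Finally I would take $\delta = \min\{\delta_0(\epsilon),\ \kappa/(c_\Gamma\sqrt{\lambda_{\max}(P)/\lambda_{\min}(P)})\}$ (and, if one insists on $\delta\in(0,1)$ as in the logarithmic-quantizer definition, intersect with $(0,1)$ as well), and the corresponding logarithmic quantizer with parameter $\delta$ satisfies both \eqref{eq:con1} and \eqref{eq:con2} simultaneously. Note also that the choice $\nu = (1-\delta)/(1+\delta)$ makes the quantization density explicit, so "sufficiently dense" is precisely "$\delta$ sufficiently small."

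The routine part is the norm-equivalence bookkeeping for \eqref{eq:con2}; the only genuinely substantive step is \eqref{eq:con1}, but that has been offloaded entirely to Lemma \ref{lem:1}, so the main thing to get right here is checking that the component-wise sector bound \eqref{eq:log_q} really does produce the hypothesis of Lemma \ref{lem:1} with $z = \mathfrak{q}(x)-x$ and $y=x$ — which it does, coordinate by coordinate, because the logarithmic quantizer acts independently on each component $\xi_{x,i}$ in the construction \eqref{eq:log_quan_Gamma}.
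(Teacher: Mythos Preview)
Your proposal is correct and matches the paper's own proof essentially step for step: the paper also identifies $\Gamma^{-1}(\mathfrak{q}(x)-x)$ with the vector of scalar quantization errors, invokes Lemma~\ref{lem:1} to obtain \eqref{eq:con1}, and then derives \eqref{eq:con2} by the same chain of inequalities $\|\mathfrak{q}(x)-x\|_2 \le \lambda_{\max}^{1/2}(\Gamma^\top\Gamma)\,\delta\,\|\xi_x\|_2 \le \delta\,\lambda_{\max}^{1/2}(\Gamma^\top\Gamma)\lambda_{\max}^{1/2}(\Gamma^{-\top}\Gamma^{-1})\|x\|_2$ followed by the $P$-norm equivalence. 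The only cosmetic difference is that the paper writes out the operator norms as $\lambda_{\max}^{1/2}(\Gamma^\top\Gamma)$ etc., while you use $\|\Gamma\|_2\|\Gamma^{-1}\|_2$; these are identical.
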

\begin{proof} On the one hand,  from the formula of quantization function $\mathfrak{q}$, we have
{\small$$
\Gamma^{-1}(\mathfrak{q}(x)-x) \!=\! [q(\xi_{x,1})-\xi_{x,1}, q(\xi_{x,2})-\xi_{x,2},\cdots, q(\xi_{x,n})-\xi_{x,n}]^\top,
$$} where
$\Gamma^{-1}x = [\xi_{x,1}, \xi_{x,2},\cdots, \xi_{x,n}]^\top$.
Then, since $G_\dn$ is diagonalizable, according to  the quantization error of the logarithmic quantizer \eqref{eq:log_q} and Lemma \ref{lem:1}, we conclude  that for any given $\epsilon>0$, there exists a sufficiently small $\delta>0$ such that the inequality $|q(\xi_{x,i})-\xi_{x,i}|\le \delta |\xi_{x,i}|$, $\forall i=1,2,\cdots, n$ implies the inequality \eqref{eq:con1}.

On the other hand, since $\Gamma$ is invertible, then  $\Gamma^{-\top}\Gamma^{-1}$ is positive definite, hence the inequality \eqref{eq:log_q} also implies  that 
\begin{equation}\label{eq:}
    \begin{aligned}
        \|\mathfrak{q}(x) - x\|_2& = \|\Gamma\left([q(\xi_{x,1}), q(\xi_{x,2}),\cdots, q(\xi_{x,n})]^\top - \xi_x\right)\|_2\\
        &\le
        \lambda_{\max}^{1/2}(\Gamma^\top \Gamma) \left(\sum_{i=1}^{n}|q(\xi_{x,i})-\xi_{x,i}|^2\right)^{1/2}\\
        &\le
        \lambda_{\max}^{1/2}(\Gamma^\top \Gamma) \left(\sum_{i=1}^{n}|\delta\xi_{x,i}|^2\right)^{1/2}\\
        &\le
        \delta\lambda_{\max}^{1/2}(\Gamma^\top \Gamma) \lambda_{\max}^{1/2}(\Gamma^{-\top} \Gamma^{-1})\|x\|_2
    \end{aligned}
\end{equation}
The latter yields \eqref{eq:con2} with 
\begin{equation}\label{eq:kappa}
\kappa=\delta\lambda_{\max}^{1/2}(\Gamma^\top \Gamma) \lambda_{\max}^{1/2}(\Gamma^{-\top} \Gamma^{-1})\tfrac{\sqrt{\lambda_{\min}(P)}}{\sqrt{\lambda_{\max}(P)}}.
\end{equation}
The proof is complete. 
$\hfill \square$
\end{proof}

The proven corollary immediately implies that all conditions of Theorem \ref{thm:main} are fulfilled for the constructed quantizer \eqref{eq:log_quan_Gamma} with a sufficiently small $\delta>0$ provided that the linear dilation $\dn$ has a diagonalizable generator. Therefore, the corresponding quantized control \eqref{eq:u_q} is a global
(local) homogeneous stabilizer of the system \eqref{eq:lin_sys} if $K_0=\mathbf{0}$
(resp., $K_0\neq \mathbf{0}$).
\begin{remark}
\textit{For the controlled chain of integrators: 
$$A=\left(\begin{smallmatrix} 0 & 1 & 0 & ...  & 0 & 0\\ 0 & 0 &1 & ...& 0 & 0\\
... & ... & ... & ... & ... & ...\\
0 & 0 & 0 & ... & 0 & 1\\
0 & 0 & 0 & ... & 0 & 0\\
\end{smallmatrix}\right), \quad B=\left(\begin{smallmatrix} 0  \\ 0\\
... \\
0 \\
1\\
\end{smallmatrix}\right)
$$
the dilation $\dn$ has a diagonal generator, so $\Gamma=I_n$ and all conditions of Theorem \ref{thm:main} hold even for the classical logarithmic quantizer: $\mathfrak{q}(x)\!=\![q(x_1),...,q(x_n)]^{\top}$, $x\!=\!(x_1,...,x_n)^{\top}\!\in\! \R^n$.
}
\end{remark}

The global fixed-time stabilizer can be designed by a combination of various homogeneous stabilizers with negative and positive degrees.
\begin{corollary}\label{cor:comu}
	\textit{
 Let $G_0$, $Y_0$, and $K_{0}=Y_0(G_0-I_n)^{-1}$, $A_0=A+BK_0$ be defined as in Theorem \ref{thm:hom_linear}. Let $G_0$ be diagonalizable
 \[
 \exists \Gamma\!\in\! \R^{n\times n} : \;\;G_0\!=\!\Gamma \Lambda_0 \Gamma^{-1}, \;\; 
 \Lambda_0\!=\!\mathrm{diag}(\tilde \lambda_1,\tilde \lambda_2,...,\tilde \lambda_n)\!\succ\! 0
 \]
  and the logarithmic quantizer $\mathfrak{q}$ be defined by the formula \eqref{eq:log_quan_Gamma} with a parameter $\delta>0$.	  
	 Suppose that the feedback control with quantized states is defined as follows:
	\begin{equation}\label{eq:u_c}
		u \!=\!\!    \left\{
		\begin{smallmatrix}
			\!\!\!K_0\mathfrak{q}(x) + \|\mathfrak{q}(x)\|_{\dn_1}^{1+\mu_1}\!K\dn_1(\!-\!\ln\|\mathfrak{q}(x)\|_{\dn_1}\!)\mathfrak{q}(x) &\text{ if } & \|\mathfrak{q}(x)\|\ge 1, \\
			\!\!\!K_0\mathfrak{q}(x) + \|\mathfrak{q}(x)\|_{\dn_2}^{1+\mu_2}\!K\dn_2(\!-\!\ln\|\mathfrak{q}(x)\|_{\dn_2}\!)\mathfrak{q}(x) & \text{ if } & \|\mathfrak{q}(x)\|< 1,
		\end{smallmatrix}
		\right.
	\end{equation}
where  $K= YX^{-1}$, $\dn_i(\cdot)$, $i=1,2$ are generated by $G_{\dn_i}=G_0+\mu_iI_n$, with $0\leq \mu_1\leq 1/n$ and $-1\leq \mu_2<0$,  $\|\cdot\|_{\dn_i}$ are induced by the norm $\|x\|\!=\!$ $\sqrt{x^{\top} P x}$, $P=X^{-1}$.
If  $X \in \mathbb{R}^{n \times n}$ and $Y \in \mathbb{R}^{m \times n}$ satisfy the following algebraic system: 
\begin{equation}\label{eq:LMI1}\small
			\left\{\!\begin{aligned}
			&\!X A_{0}^{\top}\!+\!A_{0} X\!+\!Y^{\top} B^{\top}\!+\!B Y\prec\!0, \quad   X \succ 0,\\
			&\!2(1+\mu_1)X\succeq X G_{\dn_1}^{\top}+G_{\dn_1} X \succ 0,\\
   & XG_{\dn_2}^{\top}+G_{\dn_2} X \succeq 2(1+\mu_2)X,
		\end{aligned}\right.
	\end{equation}
 then for sufficiently small $0\!<\!\delta\!<\!1$,  the system \eqref{eq:lin_sys}, \eqref{eq:u_c} is: 
}
	\begin{itemize}
		\item \textit{globally uniformly finite-time stable for $\mu_1=0$;}
		\item \textit{globally uniformly fixed-time stable for $\mu_1>0$;}
	\end{itemize} 
\end{corollary}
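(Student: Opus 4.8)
The plan is to read \eqref{eq:u_c} as a switching between two $\dn$-homogeneous quantized feedbacks — the nonnegative-degree law $(\dn_1,\mu_1)$ acting on the outer region $\Omega_1:=\{x:\|\mathfrak{q}(x)\|\ge 1\}$ and the negative-degree law $(\dn_2,\mu_2)$ acting on the inner region $\Omega_2:=\{x:\|\mathfrak{q}(x)\|<1\}$ — to apply Theorem~\ref{thm:main} to each mode separately, and then to glue the two regimes: the outer mode drives the state into $\Omega_2$ in finite time (uniformly bounded when $\mu_1>0$), and the inner mode, a local finite-time stabilizer with a sufficiently large basin, finishes the job in bounded additional time.

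First I would show that the algebraic system \eqref{eq:LMI1} encodes exactly the hypotheses of Theorem~\ref{thm:main} for both modes, with $P=X^{-1}$ defining the homogeneous norms. The strict inequality on the first line of \eqref{eq:LMI1}, combined with $XG_{\dn_i}^\top+G_{\dn_i}X\succ 0$, yields a $\rho_i>0$ with $XA_0^\top+A_0X+Y^\top B^\top+BY\preceq-\rho_i(XG_{\dn_i}^\top+G_{\dn_i}X)$; this inequality is the only way \eqref{eq:LMI0} enters the proof of Theorem~\ref{thm:main}, which then gives $\tfrac{d}{dt}\|x\|_{\dn_i}\le-\rho_i\|x\|_{\dn_i}^{1+\mu_i}+(\text{quantization remainder})$. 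Conjugating the second line $XG_{\dn_1}^\top+G_{\dn_1}X\preceq 2(1+\mu_1)X$ by $X^{-1/2}=P^{1/2}$ gives $\overline{\eta}_1\le 1+\mu_1$, so $\mu_1\ge\overline{\eta}_1-1$, which together with $\mu_1\ge 0$ is the condition $\mu>\max\{\overline{\eta}-1,0\}$ of Theorem~\ref{thm:main} (with $\mu_1=0$ the boundary exponential case); symmetrically the third line gives $\underline{\eta}_2\ge 1+\mu_2$, i.e.\ the condition $\mu<\min\{\underline{\eta}-1,0\}$ for the negative-degree mode. Finally, Corollary~\ref{cor:2} ensures the logarithmic quantizer \eqref{eq:log_quan_Gamma} satisfies \eqref{eq:con1} and \eqref{eq:con2} simultaneously, with $\epsilon$ and $\kappa$ as small as desired by taking $\delta$ small.

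Next I would fix $\delta$ so small that the comparison estimate \eqref{eq:d_norm_3} (applied to each mode) implies: on $\Omega_1$ the right-hand side of \eqref{eq:d_norm_3} for $\dn_1$ is strictly negative — this uses that $\|\mathfrak{q}(x)\|\ge 1$ forces $\|x\|\ge\tfrac{1}{1+\kappa}$, hence $\|x\|_{\dn_1}$ is bounded away from $0$ on $\Omega_1$, so the quantization-remainder term in \eqref{eq:d_norm_3} is dominated; and that the basin of attraction of the mode-$2$ stabilizer contains a bounded sublevel set $\mathcal{B}:=\{x:\|x\|_{\dn_2}\le R\}\supseteq\Omega_2$ (finite $R$ because \eqref{eq:con2} gives $\Omega_2\subseteq\{\|x\|<\tfrac{1}{1-\kappa}\}$) on which $\tfrac{d}{dt}\|x\|_{\dn_2}\le 0$ along the mode-$2$ dynamics. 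The first property shows that from any $x_0\in\Omega_1$ the value $\|x\|_{\dn_1}$ strictly decreases, so the state leaves $\Omega_1$ in finite time — and in a time $\le T_1$ independent of $x_0$ when $\mu_1>0$, by the nearly-fixed-time estimate behind \eqref{eq:norm_dt}. Upon leaving $\Omega_1$ the state lies in $\Omega_2\subseteq\mathcal{B}$, and the second property together with an inward-pointing check on the switching layer (where, under Filippov, the velocity is a convex combination of mode-$1$ and mode-$2$ fields of neighbouring quantization cells, each decreasing the relevant homogeneous norm) shows that $\mathcal{B}$ is forward invariant; Theorem~\ref{thm:main} then yields finite-time convergence to the origin within a bounded time $T_2$ uniform over $\mathcal{B}$. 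Global uniform (practical) Lyapunov stability throughout follows from monotonicity of $\|x\|_{\dn_1}$ on $\Omega_1$ and of $\|x\|_{\dn_2}$ on $\mathcal{B}$ and the equivalence of both norms with $\|\cdot\|$. Hence for $\mu_1>0$ the total settling time is $\le T_1+T_2$ uniformly in $x_0$ (global uniform fixed-time stability), while for $\mu_1=0$ the exponential outer phase reaches $\Omega_2$ in finite but $x_0$-dependent time and the inner phase finishes in bounded time (global uniform finite-time stability).

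The step I expect to be the main obstacle is this gluing — establishing, for the switched closed loop whose switching condition is written through the \emph{discontinuous} map $\mathfrak{q}$, that the Filippov dynamics are well-posed, that no Zeno or sliding motion at the layer $\{\|\mathfrak{q}(x)\|\approx 1\}$ lets the state escape $\mathcal{B}$, and that the settling-time estimates of the two modes genuinely concatenate. The resolution I would rely on is that for $\delta$ small both candidate vector fields decrease a common comparison functional near the switching layer (the weighted norm $\|\cdot\|$, which agrees locally, up to equivalence, with $\|\cdot\|_{\dn_1}$ on the outer side and with $\|\cdot\|_{\dn_2}$ on the inner side), so every element of the Filippov set-valued map there is inward-pointing; the remaining estimates are routine consequences of Theorem~\ref{thm:main}, \eqref{eq:d_norm_3}, and the norm equivalences in Proposition~\ref{prop:3}.
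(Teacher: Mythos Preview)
Your ingredients are the paper's: you correctly read the LMIs \eqref{eq:LMI1} as encoding $\overline{\eta}_1\le 1+\mu_1$ and $\underline{\eta}_2\ge 1+\mu_2$, you invoke Corollary~\ref{cor:2} to push $\epsilon,\kappa$ small, and you feed everything into the estimate \eqref{eq:d_norm_3} from Theorem~\ref{thm:main}. The structural difference is how the two regimes are assembled. You partition by the \emph{discontinuous} switching condition $\|\mathfrak{q}(x)\|\gtrless 1$ and then concatenate an outer phase with an inner phase, which forces you to confront well-posedness, possible chattering at the switching layer, and forward invariance of a set $\mathcal{B}$ under a mode that is not the one whose Lyapunov function defines $\mathcal{B}$. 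The paper sidesteps all of this by building a \emph{single} continuous Lyapunov function
\[
V(x)=\begin{cases}\|x\|_{\dn_1},&\|x\|\ge 1,\\ \|x\|_{\dn_2},&\|x\|\le 1,\end{cases}
\]
(continuous because both homogeneous norms equal $1$ on the unit sphere) and partitions the state space by the \emph{continuous} quantity $\|x\|$ into three pieces $\Omega_1=\{\|x\|\ge\tfrac{1}{1-\kappa}\}$, $\Omega_2=\{\|x\|\le\tfrac{1}{1+\kappa}\}$, and the thin annulus $\Omega_3$ in between. On $\Omega_1$ and $\Omega_2$ the active mode is unambiguous and \eqref{eq:d_norm_3} applies directly; on $\Omega_3$ one simply uses that $\|\mathfrak{q}(x)-x\|\le\tfrac{\kappa}{1-\kappa}$ is bounded and plugs into \eqref{eq:d_norm_2} for \emph{whichever} mode happens to be active, taking the worst of the two. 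This yields $\dot V<0$ globally for small $\delta$ without any separate gluing argument, Zeno analysis, or forward-invariance check. Your approach would work, but the ``common comparison functional near the switching layer'' you reach for at the end is precisely the paper's $V$, and committing to it from the start is what dissolves the obstacle you flag.
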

\begin{proof}
Let us consider the following Lyapunov function candidate:
\begin{equation}
    V = \left\{
\begin{aligned}
    &\|x\|_{\dn_1}, & \|x\|\ge 1\\
    &\|x\|_{\dn_2}, & \|x\|\le 1\\
\end{aligned}
    \right.
\end{equation}
Notice that $V\in C^{\R^n}\cap C^{1}(\R^{n}\backslash S\backslash\{\mathbf{0}\})$, where $S=\{x\in \R^n: \|x\|=1\}$.
For sufficiently small $\delta\in (0,1)$ we have 
	\begin{equation}
		(1-\kappa)\|x\| \leq \|\mathfrak{q}(x)\| \leq (1+\kappa)\|x\|,
	\end{equation}
  where $\kappa=\kappa(\delta)$ has the form as \eqref{eq:kappa}, then 
 \begin{equation}
		\tfrac{1}{(1-\kappa)}\!\leq\! \|x\| \Rightarrow 1 \!\leq\! \|\mathfrak{q}(x)\|,\!\; \|x\|\!\le\! \tfrac{1}{(1+\kappa)}\Rightarrow \|\mathfrak{q}(x)\|\!\le\! 1.
	\end{equation}
 
 Let $\Omega_1:=\{x\in\mathbb{R}^n:\tfrac{1}{(1-\kappa)}\leq \|x\|\}$, $\Omega_2:=\{x\in\mathbb{R}^n:\|x\|\le \tfrac{1}{(1+\kappa)}\}$, $\Omega_3:=\{x\in\mathbb{R}^n:\tfrac{1}{(1+\kappa)}\leq \|x\|\le \tfrac{1}{(1-\kappa)}\}$.
 {By Corollary \ref{cor:2}, the logarithmic quantizer  $\mathfrak{q}(x)$ satisfies the conditions \eqref{eq:con1} and \eqref{eq:con2} for a sufficiently small $0 < \delta < 1$.} Thus
 according to Theorem \ref{thm:main} and  \eqref{eq:d_norm_3}, the derivative of $V$ admits the estimate {\small\begin{equation}\label{eq:d_norm_4}
	\dot{V} \!\le\!\!\left\{\begin{aligned}
		 &\!\!-\!\!\left[\rho_1\!-\!c_{2,\mu_1}\overline{\gamma}_1(\epsilon) \!-\! c_{1,\mu_1}\kappa\|x\|_{\dn_1}^{\overline{\eta}_1-(1+\mu_1)}\right]\|x\|_{\dn_1}^{1+\mu_1},\ x\!\in\!\Omega_1\\
   &\!\!-\!\!\left[\rho_2\!-\!c_{2,\mu_2}\overline{\gamma}_2(\epsilon) \!-\! c_{1,\mu_2}\kappa\|x\|_{\dn_2}^{\underline{\eta}_2-(1+\mu_2)}\right]\|x\|_{\dn_2}^{1+\mu_2},\ x\!\in\!\Omega_2,
	\end{aligned}
\right.
\end{equation}
}
{where $c_{1,\mu_i} \!=\! \tfrac{\lambda_{\max}^{1/2}(K_0^\top B^\top PBK_0)}{h_i}$, $c_{2,\mu_i}\!=\!\tfrac{\lambda_{\max}^{1/2}(K^\top B^\top PBK)}{h_i}$,\\ $\rho_i = \tfrac{\lambda_{max}(X A_{0}^{\top}\!+\!A_{0} X\!+\!Y^{\top} B^{\top}\!+\!B Y)}{h_i}$, \\
$h_i=\lambda_{\min}(P^{1/2}G_{\dn_i} P^{-1/2} \!+\! P^{-1/2}G_{\dn_i}^\top P^{1/2})$, $i=1,2$.
The functions $\overline{\gamma}_1, \overline{\gamma}_2\in\mathcal{K}$ are obtained from \eqref{eq:con_d}, \eqref{eq:class_K} with $(\dn_i,\mu_i)$, $i=1,2$.
}

The LMIs defined in (\ref{eq:LMI1}) guarantee $\overline{\eta}_1-1\le \mu_1$ and $\mu_2\le\underline{\eta}_2-1$, where $\overline{\eta}_i\!=\!\tfrac{1}{2} \lambda_{\max }(X^{-\frac{1}{2}} G_{\mathrm{d}_i} X^{\frac{1}{2}}+X^{\frac{1}{2}} G_{\mathrm{d}_i}^{\top} X^{-\frac{1}{2}})$, $
\underline{\eta}_i=\tfrac{1}{2} \lambda_{\min }(X^{-\frac{1}{2}} G_{\mathrm{d}_i} X^{\frac{1}{2}}+P^{-\frac{1}{2}} G_{\mathrm{d}_i}^{\top} X^{-\frac{1}{2}})$, $i=1,2$.
{
Then, we have $\|x\|_{\dn_1}^{\overline{\eta}_1-(1+\mu_1)}$ monotonically decreasing to zero as $\|x\|_{\dn_1}$ increases to infinity, and $\|x\|_{\dn_2}^{\overline{\eta}_2-(1+\mu_2)}$ monotonically decreasing to zero as $\|x\|_{\dn_2}$ decreases to zero. Thus, we conclude that for a sufficiently small $\delta$ we have
\[
\dot{V}<0, \ x\in\Omega_1\cup\Omega_2.
\]
}
On the other hand, for $\tfrac{1}{1+\kappa}\le\|x\|\le \tfrac{1}{1-\kappa}$, the quantization error has 
$$
\|\mathfrak{q}_x-x\|\le \tfrac{\kappa}{1-\kappa}.
$$
Using  \eqref{eq:d_norm_2}, for a sufficiently small $\delta>0$
we derive 
\begin{equation}\label{eq:d_norm_5}
	\begin{aligned}
		\dot{V} \!\le\! \max\limits_{i=1,2}\left\{\!-\!\tfrac{\rho_i\!-\!c_{2,\mu_i}\overline{\gamma}_i(\epsilon)}{(1+\kappa)^{1+\mu_i}}\!+\!c_{1,\mu_i}\tfrac{\kappa}{1-\kappa}\right\}<0,\ x\!\in\!\Omega_3
	\end{aligned}
\end{equation}
	Therefore, $V$ is a global Lyapunov function and the system is globally asymptotically stable. Since $\mu_2<0$, then by Theorem \ref{thm:main} it is finite-time stable if $\mu_1=0$
		and fixed-time stable if 
		$\mu_1>0$. The proof is complete.
$\hfill \square$
\end{proof}


\section{Numerical simulation}\label{section:sim}

{
To demostrate the finite/fixed-time stabilization via the numerical simulations, we consider two LTI systems: 
\begin{equation*}
       A_1 \!=\! \left[\begin{smallmatrix}
        0 & 1 & 0\\
        0 & 0 & 1\\
     0  & 0 & 0
\end{smallmatrix}\right],\  B_1 \!=\! \left[\begin{smallmatrix}
        0 & 0\\
        0 & 0\\
        1 & 2
\end{smallmatrix}\right],\
 A_2 \!=\! \left[\begin{smallmatrix}
        1 & 1 & 0\\
        0 & 0 & -0.5\\
     0.2  & 0 & 0.5
\end{smallmatrix}\right],\  B_2 \!=\! \left[\begin{smallmatrix}
        0 & 0\\
        0 & 1\\
        1 & 2
    \end{smallmatrix}\right].
\end{equation*}
The matrix $A_1$ is nilpotent and $(A_i,B_i)$, $i=1,2$ are controllable, with initial condition $x_0=[20,20,10]^\top$.
For system \eqref{eq:lin_sys}, \eqref{eq:u_q} with $A=A_1$, $B=B_1$, $K_0 = \boldsymbol{0}$, the control parameters are obtained by solving the LMIs \eqref{eq:LMI0}. For negative degree $\mu=-0.2$, $\rho = 2$, we have 
$$
K \!=\! \!-\!\left[\begin{smallmatrix}
    5.65 &  4.51 &  1.44\\
  11.30  & 9.03 &  2.88
\end{smallmatrix}\right], G_\dn \!=\!\! \left[\begin{smallmatrix}
    1.4 &  0 &        0\\
    0  &  1.2 &        0\\
    0  & 0  &  1
\end{smallmatrix}\right],  P \!=\!\! \left[\begin{smallmatrix}
    70.88 &  39.18  &  7.02\\
   39.18 &  22.32  &  4.11\\
    7.02  &  4.11  &  0.79
\end{smallmatrix}\right]\!.
$$
For positive degree $\mu=0.2$, $\rho = 2$ the parameters are $$K \!=\!\! -\left[\begin{smallmatrix}
    0.87  & 1.68  & 0.96\\
   1.73 &  3.23 &  1.92
\end{smallmatrix}\right], 
G_\dn \!=\!\! \left[\begin{smallmatrix}
    0.6 &  0 &        0\\
    0  &  0.8 &        0\\
    0  & 0  &  1
\end{smallmatrix}\right],  P \!=\!\! \left[\begin{smallmatrix}
    7.61  &  7.08  &  2.11\\
    7.08  &  7.67  &  2.39\\
    2.11  &  2.39 &   0.86
\end{smallmatrix}\right].
$$
The simulation results of the closed-loop system
\eqref{eq:lin_sys}, \eqref{eq:u_q} with $A=A_1,B=B_1$ are presented in 
Fig. \ref{fig:2}. The obtained trajectories are typical for finite-time and nearly fixed-time convergence. The simulation was conducted with the parameter   $\nu=0.9$ of the logarithmic quantizer (see, the formula \eqref{eq:log_quantizer}). 

For a non-nilpotent system \eqref{eq:lin_sys} with $A=A_2$, $B=B_2$, we design a global stabilizer \eqref{eq:u_c} by Corollary \ref{cor:comu}. The control parameters are obtained by solving LMIs \eqref{eq:LMI1} with $\mu_1 = 0.2$, $\mu_2=-0.5$ for fixed-time stabilization and $\mu_1 = 0$, $\mu_2=-0.5$ for finite-time stabilization:
$$ K = \left[\begin{smallmatrix}
    10.09  &  2.66 &  -0.61\\
   -5.05 &  -1.33  & -0.0001
\end{smallmatrix}\right],P \!=\!\! \left[\begin{smallmatrix}
    0.4959  &  0.1629  &  0\\
    0.1629  &  0.1179  & 0\\
    0  & 0  &  0.1131
\end{smallmatrix}\right]$$
$$K_0 \!=\! \left[\begin{smallmatrix}
    1.8  &  2 & -1.5\\
   -1 &  -1 &    0.5\\
\end{smallmatrix}\right], G_{\dn_1} \!=\!\! \left[\begin{smallmatrix}
    0.8  &  0 &  0\\
    0.2  &  1  &  0\\
   -0.0003  &  0  &  1
\end{smallmatrix}\right], G_{\dn_2} \!=\!\! \left[\begin{smallmatrix}
    1.5  &  0 &  0\\
    -0.5  &  1  &  0\\
   0.0007  &  0  &  1
\end{smallmatrix}\right].$$
The simulation results for this case  are presented in Fig.  \ref{fig:3}. They also show the trajectories typical for finite/fixed-time convergent systems. 
To illustrate the quantization effect, Fig  \ref{fig:4} shows the evolution of the quantized measurements.



In Fig. \ref{fig:4}, we provide the quantized signal plots, offering insight into the system's behavior under the implemented quantization scheme.

\begin{figure}[htbp]
    \centering
\includegraphics[width = 0.45\textwidth]{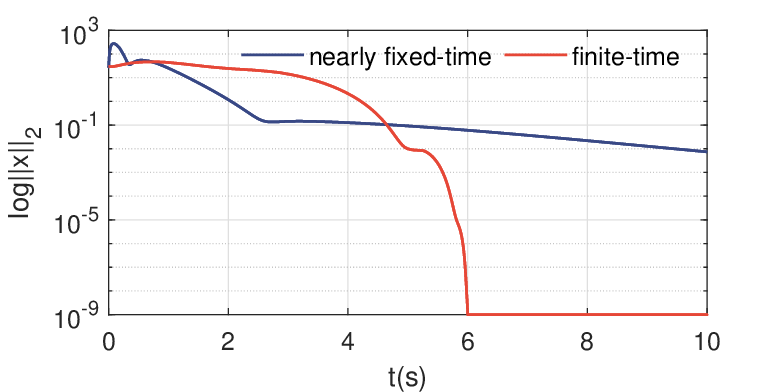}
    \caption{The closed-loop system
\eqref{eq:lin_sys}, \eqref{eq:u_q} with $A\!=\!A_1,B\!=\!B_1$.}
    \label{fig:2}
\end{figure}

\begin{figure}[htbp]
    \centering
    \includegraphics[width = 0.45\textwidth]{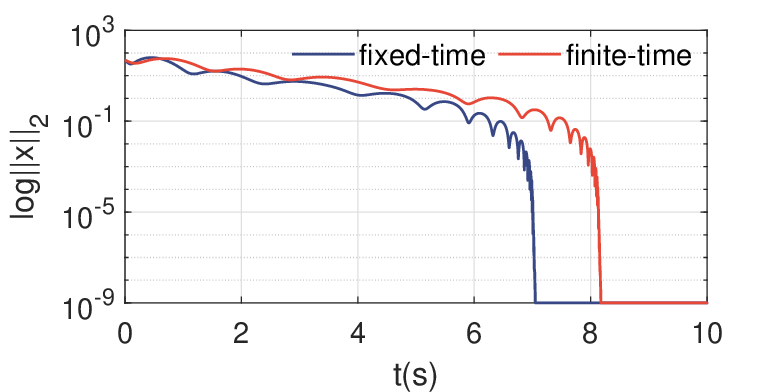}
 \caption{The closed-loop system \eqref{eq:q_sys}, \eqref{eq:u_c} with $A = A_2$, $B=B_2$. }
    \label{fig:3}
\end{figure}
\begin{figure}[htbp]
    \centering
    \includegraphics[width = 0.45\textwidth]{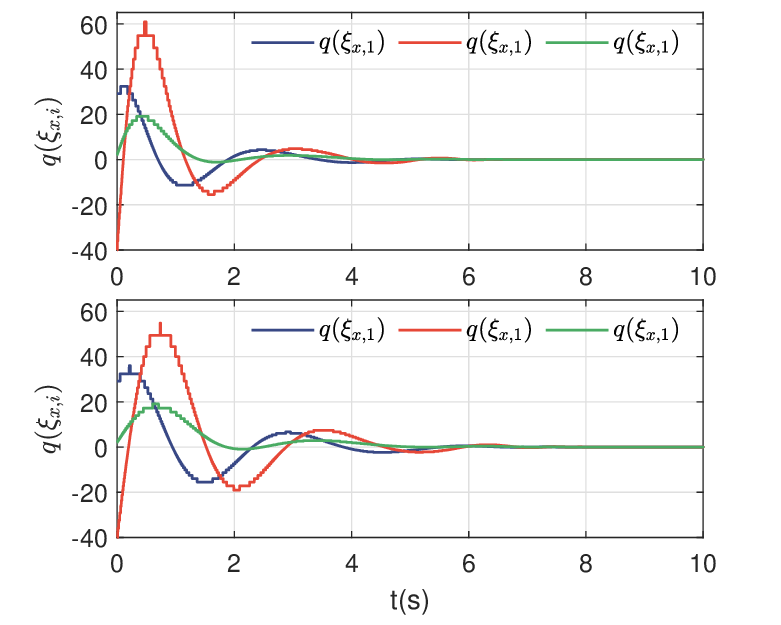}
 \caption{Quantized measurements of $\Gamma^{-1}x$ using a logarithmic quantizer for system \eqref{eq:q_sys} and \eqref{eq:u_c} with $A = A_2$ and $B = B_2$: upper figure for the fixed-time control and lower figure for finite-time control.}
    \label{fig:4}
\end{figure}

\section{Conclusion}
This paper addressed the problem of finite/fixed-time stabilization of LTI systems under quantized state measurements. It is discovered that the  stability and convergence rates of the LTI systems with a homogeneous control can be preserved at least locally under certain logarithmic quantizer. Global finite/fixed-time stability is proved for homogeneous plants (i.e., for LTI systems with nilpotent matrices).
For a general LTI plant, a global finite/fixed-time stabilization with quantized measurements can be achieved using a switched locally homogeneous control.
The theoretical results are supported by numerical simulations. 
By providing valuable insights into the behavior of homogeneous systems with  quantization effects, these results contribute to the understanding and applicability of finite/fixed-time controller under quantized state measurements. Future research will focus on investigating robustness and exploring a new kind of quantizers (e.g., inspired by the homogeneity).


\bibliographystyle{unsrt}
\bibliography{reference}

\end{document}